%------------------------------------------------------------------------------
% Beginning of journal.tex
%------------------------------------------------------------------------------
%
% AMS-LaTeX version 2 sample file for journals, based on amsart.cls.
%
%        ***     DO NOT USE THIS FILE AS A STARTER.      ***
%        ***  USE THE JOURNAL-SPECIFIC *.TEMPLATE FILE.  ***
%
% Replace amsart by the documentclass for the target journal, e.g., tran-l.
%
\documentclass{amsart}

\usepackage[mathscr]{euscript}
\usepackage{mathrsfs}
\usepackage{amsmath}
\usepackage{amsmath,amssymb,amsfonts}
\usepackage{bbm}
\def\1{\mathbbm{1}}
  \usepackage{paralist}
  \usepackage{graphics} %% add this and next lines if pictures should be in esp format
  \usepackage{epsfig}
  
   \newcommand{\R}{{\mathbb R}}
                   % L'ensemble des réels
                  % L'ensemble des Entiers
                  % L'ensemble des rationnels
                  % L'ensemble des relatifs
\newcommand{\C}{{\mathbb C}}                  % L'ensemble des complexes
\newcommand{\F}{{\mathbb F}}
\newcommand{\D}{{\mathbb D}}

\newcommand{\A}{{\mathbb A}}

\def\al{\alpha}
\def\om{\omega}
\def\Om{\Omega}
\def\ga{\gamma}
\def\si{\sigma}

\def\la{\lambda}

\def\t{\tau}
\def\La{\Lambda}

\def\calL{{\mathcal{L}}}

\def\calB{{\mathcal{B}}}

\def\calA{{\mathcal{A}}}

\def\calC{{\mathcal{C}}}

\def\calX{{\mathcal{X}}}

\def\calF{{\mathcal{F}}}

\def\R{\mathbb R}

\def\P{\mathbb P}
\def\D{\mathbb D}

\def\C{\mathbb C}

\def\E{\mathbb E}

\def\F{\mathbb F}

\def\L{\mathbb L}
\def\U{\mathbb U}

\newtheorem{theorem}{Theorem}[section]

\newtheorem{lemma}[theorem]{Lemma}
\newtheorem{proposition}{Proposition}

\theoremstyle{definition}
\newtheorem{definition}[theorem]{Definition}
\newtheorem{remark}{Remark}
\newtheorem{example}{Example}

\numberwithin{equation}{section}

%    Absolute value notation

%    Blank box placeholder for figures (to avoid requiring any
%    particular graphics capabilities for printing this document).

\begin{document}

\title[Stochastic systems with input delay at the Boundary]{A semigroup approach to stochastic systems with input delay at the boundary}

%    Information for first author
\author{S. Hadd}
%    Address of record for the research reported here
\address{Department of Mathematics,  Faculty of Sciences, Hay Dakhla, BP. 8106, 80000--Agadir, Morocco}
%    Current address
%\curraddr{Department of Mathematics,  Faculty of Sciences Agadir}
\email{s.hadd@uiz.ac.ma}
%    \thanks will become a 1st page footnote.
%\thanks{The first author was supported in part by NSF Grant \#000000.}

%    Information for second author
\author{F.Z. Lahbiri}
\address{Department of Mathematics, Faculty of Sciences, Hay Dakhla, BP. 8106, 80000--Agadir, Morocco}
\email{fatimazahra.lahbiri@gmail.com}
%\thanks{Support information for the second author.}

%    General info
\subjclass[2020]{Primary: 93E03, 93C23; Secondary: 93B28}

%\date{January 1, 2001 and, in revised form, June 22, 2001.}

%\dedicatory{This paper is dedicated to our advisors.}

\keywords{Stochastic systems and control, admissible operators, Boundary input delay, semigroup}

\begin{abstract}
This work focuses on the well-posedness of abstract stochastic linear systems with boundary input delay and  unbounded observation operators. We use product spaces and a semigroup approach to reformulate such delay systems into free-delay distributed stochastic systems with unbounded control and observation operators. This gives us the opportunity to use the concept of admissible control and observation operators as well as the concept of Yosida extensions to prove the existence and uniqueness of the solution process and provide an estimation of the observation process in relation to initial conditions and control process.  As an example, we consider a stochastic Schr\"odinger system  with input delay.
\end{abstract}

\maketitle

\section{Introduction}\label{sec:0}

The theory of infinite dimensional systems is now a well established field, see e.g. \cite{Cu-ZW}, \cite{Hadd-siam}, \cite{Sal}, \cite{Staf}, \cite{Tucsnak Weiss}, \cite{WeiRegu}. Recently, the author of \cite{Lu2015} extended some parts of this theory to stochastic linear systems of infinite dimension. In addition, the work \cite{LW-19} extends in a way the Salamon--Weiss systems to Stochastic Port-Hamiltonian Systems. More recently, in \cite{Lahbiri-Hadd} we introduced a theoretical approach to a large class of stochastic infinite dimensional systems. This approach uses the concept of Yosida extensions of admissible observation operators to give a representation to the output function of the system.  For other works dealing with stochastic infinite dimensional control systems we refer to \cite{Alb2} and \cite{Alb3}, \cite{Da-Za-SSR}, \cite{dap}, \cite{Fa-GO-09}, \cite{Go-Ma-17}, \cite{Lu-Zhang-21}, \cite{Lu-Zhang}.

This work is a natural continuation of the work \cite{Lahbiri-Hadd} which considers a stochastic linear system with input delays at the boundary conditions and a boundary observation of the form
\begin{align}\label{stoc-inputoutput}
\begin{cases}
dX(t)=A_{m}X(t)dt+M(X(t))dW(t),\quad X(0)=\xi, & t>0,  \\
GX(t)=\displaystyle\int_{-r}^{0}d\nu(\theta)U(t+\theta), & t>0,\\
U(t)=\varphi(t),\quad \P-a.s.& a.e.\; t\in [-r,0],\cr
Y(t)=\mathscr{C}X(t), & t>0.
\end{cases}
\end{align}
Here $A_m: D(A_m):=\mathscr{Z}\subset H\to H$ is a linear closed operator on a Hilbert space $H$, $M:H\to H$ is a linear bounded operator, $G:\mathscr{Z}\to \mathscr{U}$ and $\mathscr{C}:\mathscr{Z}\to \mathscr{Y}$ are linear (unbounded) operators, the trace and observation operator, respectively, with $\mathscr{U}$ and $\mathscr{Y}$ are Hilbert spaces,  $\nu:[-r,0]\to\calL(\mathscr{U})$ is a function of bounded variation continuous at zero with total variation $|\nu|$ (a positive Borel measure) such that $|\nu|([-\alpha,0])$ goes to zero as $\al\to 0$. On the other hand, we will work  with a standard one-dimensional  Brownian motion $(W(t))_{t\geq 0}$ together with a filtered probability space  ($\Om,\calF,\F,\P$) with a natural filtration $\mathbb{F}=\{\mathscr{F}_{t}\}_{t\geq0}$ generated by $(W(t))_{t\geq 0}$.  For $t<0$, $\mathscr{F}_{t}$ is taken to be $\mathscr{F}_{0}$. Finally, $X:[0,+\infty)\times \Om\to H,$ denotes the solution process of the system, $U:[-r,+\infty)\times \Om\to \mathscr{U},$  the control process, and $Y:[0,+\infty)\times \Om\to \mathscr{Y},$  the observation process of the system.

The main purpose of this work is to prove the well-posedness of the system \eqref{stoc-inputoutput}. We first notice that the case $\nu=B_1 \1_{\{0\}}$ with $B_1\in\mathcal{L}(\mathscr{U})$ ( so that the boundary condition takes the form $GX(t)=U(t)$), is recently investigated in \cite{Lahbiri-Hadd} and \cite{Lu2015}, \cite{Lu-Zhang-21},  \cite{Lu-Zhang}.

In the absence of the noise (i.e. in the deterministic case), input delay systems are well-studied using a natural feedback theory and/or a direct computation for some particular cases such as discrete delays, that is $\nu=\sum_{k=1}^n B_k \1_{[-r_k,0]}$ for $r_k\in [0,1]$ and $B_k\in\mathcal{L}(\mathscr{U})$, see e.g. \cite{BNS-21} \cite{CXC-19}, \cite{DLP-86}, \cite{Han}, \cite{XYL-06} and the references therein. We also mention that, another easy way to prove the well-posedness of deterministic case of \eqref{stoc-inputoutput} is the use of the work \cite[Section 5]{Hadd Manzo Ghandi}. In fact, with some effort one can use product spaces to reformulate the systems as a Cauchy problem with unbounded perturbation of the generator. In this case, the well-posedness is heavily related to the feedback theory of regular linear systems \cite{WeiRegu}.

In the presence of the noise the situation is quite different due to sensitivity of the stochastic convolution. In this case, the lack of a natural feedback theory for stochastic linear systems (see \cite{Lahbiri-Hadd}) makes the problem difficult. To overcome a such problem we will use a pure operator theory approach and boundary value problem (see e.g. \cite{Barbu-book-2013}, \cite{Greiner}) to convert the delay stochastic system \eqref{stoc-inputoutput} into a free-boundary input delay system (see the system \eqref{Trans-1}). This allows us to use the recent work \cite{Lahbiri-Hadd} to prove the well-posedness of the system \eqref{stoc-inputoutput} (see Theorem \ref{Main-1}).

In this work we will use the following notation. For any  Hilbert space $\calX$ and any real number $\t\in (0,\infty)$, we denote by $L^{2}_{\F}([0,\t],\calX)$ the set of all $\F$-adapted ($\calX$ valued)  processes $\zeta:[0,\t]\times\Omega\rightarrow\calX$ such that $\mathbb{E}\|\zeta\|^2_{L^{2}([0,\t],\calX)}<\infty.$ We denote by $L^{2}_{\calF}(\Omega,H)$ the Hilbert space of all $\calF$-measurable ($H$-valued) random variables $\xi:\Omega\rightarrow H$ satisfying $\mathbb{E}\|\xi\|_{H}^{2}<\infty$. With these notations, the initial conditions of the problem \eqref{stoc-inputoutput} are supposed as follow: $\xi\in L^2_{\mathscr{F}_0}(\Om,H)$ and $\varphi:[-r,0]\times \Om\to \mathscr{U}$  a $\mathscr{F}_0$-measurable process such that
\begin{align*}
\E\int^0_{-r}\|\varphi(s)\|^2ds<\infty.
\end{align*}

Some classical background on semigroup theory is also required. Let $A:D(A)\subset H\to H$ be the generator of a strongly continuous semigroup $(T(t))_{t\ge 0}$ on $H$. We denote by $\rho(A)$ the resolvent set of $A$, that is,  the set of all $\la\in\C$ such that the inverse $(\la I-A)^{-1}:=R(\la,A)$ exists (which is linear bounded due to the closed graph theorem). The spectrum of $A$ is the set $\si(A)=\C\backslash\rho(A)$. The type of the semigroup $(T(t))_{t\ge 0}$ is defined by $\om_0(A):=\inf_{t>0}\frac{1}{t}\log\|T(t)\|$. According to \cite[chap.II, Section 5]{Engel-Nagel},  if $\beta>\om_0(A)$, then $\{\la\in\C:{\rm Re}\la>\beta\}\subset \rho(A)$ and there exists $M\ge 1$ such that $\|T(t)\|\le Me^{\beta t}$ for any $t\ge 0$. On one hand, the domain  $D(A)$ endowed with the following graph norm $\|x\|_1:=\|x\|+\|Ax\|,\;x\in D(A),$ is a Banach space. On the other hand, for $\al\in\rho(A),$ we define a new norm $\|x\|_{-1}=\|R(\la,A)x\|$ for $x\in X$. This norm is independent of $\al$. We now define $X_{-1}$ as the completion of $X$ with respect to the norm $\|\cdot\|_{-1}$, which means that $X_{-1}=\overline{X}^{\|\cdot\|_{-1}}$. This space is called the extrapolation space associated with $A$ and $X$. As $H$ is a Hilbert space, then it is known that $X_{-1}$ is isomorphic to the topological dual $D(A^\ast)',$ where $A^\ast$ is the adjoint operator of $A$. Furthermore, the semigroup $(T(t))_{t\ge 0}$ is extended to another strongly continuous semigroup $(T_{-1}(t))_{t\ge 0}$ on $X_{-1},$ whose generator $A_{-1}:X\to X_{-1}$ is the extension of $A$ to $X$, see \cite[Chap.II, Theorem 5.5]{Engel-Nagel}.

The organization of this work is as follow: In Section \ref{sec:2}, we recall some facts about deterministic well-posed and regular linear systems. In Section \ref{sec:3}, we state and prove the main result of the work about the well-posedness of the system \eqref{stoc-inputoutput}. The last section is concerned with an example of the well-posedness of a stochastic Schr\"odinger
system  with input delay.

\section{A concise background on infinite dimensional linear systems}\label{sec:2}
The theory of infinite-dimensional linear systems with unbounded control and observation operators is now a well established theory which plays a key role in control engineering, see e.g. \cite{Cu-ZW}, \cite{Staf}, \cite{Tucsnak Weiss}. Recently, parts of this theory has been extended to stochastic systems of infinite dimension, see \cite{Lahbiri-Hadd}, \cite{LW-19}, \cite{Lu2015}, \cite{Lu-Zhang-21},\cite{Lu-Zhang}.

Throughout this section we use the following notations:  $(\mathfrak{X},|\cdot|)$ and $(\partial\mathfrak{X},\|\cdot\|_{\partial\mathfrak{X}})$ are Hilbert space, $\mathfrak{A}_m: D(\mathfrak{A}_m)\subset \mathfrak{X}\to \mathfrak{X}$ is a closed linear operator, and $\Theta: D(\mathfrak{A}_m)\to \partial\mathfrak{X}$ is a surjective linear operator.

\subsection{Admissible observation operators}\label{sub-c}
Consider the observed system
\begin{align}\label{AC}
\begin{cases}
\dot{z}(t)=\mathfrak{A}_m z(t),\quad z(0)=x,& t>0,\cr \Theta z(t)=0,& t\ge 0,\cr y(t)=\Upsilon z(t),& t\ge 0,
\end{cases}
\end{align}
where $\Upsilon:D(\mathfrak{A}_m)\to \partial \mathfrak{X}$ is a linear operator (not necessarily closed or closeable). In what follow, we assume that the linear operator
\begin{align}\label{A}
\mathfrak{A}:=\left(\mathfrak{A}_m\right)_{|D(\mathfrak{A})}\quad\text{with domain}\quad D(\mathfrak{A}):=\ker\Theta
\end{align}
generates a strongly continuous semigroup $(\mathfrak{T}(t))_{t\ge 0}$ on $\mathfrak{X}$. We consider the linear operator
\begin{align}\label{C}
\mathfrak{C}:=\Upsilon\quad\text{with}\quad D(\mathfrak{C}):= D(\mathfrak{A}).
\end{align}
The boundary system \eqref{AC} can be reformulated as the following distributed system
\begin{align}\label{AC-dist}
\begin{cases}
\dot{z}(t)=\mathfrak{A} z(t),\quad z(0)=x,& t>0,\cr y(t)=\mathfrak{C} z(t),& t\ge 0,
\end{cases}
\end{align}
Observe that for the initial condition $x\in\mathfrak{X}$, the function $z(t)=\mathfrak{T}(t)x,\;t\ge 0,$ is the mild solution of the differential equation in \eqref{AC-dist}. However, the expression $y(t;x)= \mathfrak{C}\mathfrak{T}(t)x$ is well defined only if $x\in D(\mathfrak{A})$. The observed linear system \eqref{AC-dist} (or \eqref{AC}) is called well-posed if for any $x\in \mathfrak{X},$ the output function $t\mapsto y(t;x)$ can be extended to a function (denoted by the same symbol) $y(\cdot;x)\in L^2_{loc}([0,+\infty),\partial\mathfrak{X})$ such that $\|y(\cdot;x)\|_{L^2([0,\al],\partial\mathfrak{X})}\le \ga |x|,$ for any $x\in \mathfrak{X}$ and $\al>0,$ where $\ga:=\ga(\al)>0$ is a constant.

In the following we give conditions for which the system \eqref{AC-dist} is well-posed. To this end, we need the following definition.
\begin{definition}
  The operator $\mathfrak{C}\in\calL(D(\mathfrak{A}),\partial\mathfrak{X})$ is called an admissible observation operator for $\mathfrak{A},$ if for some (hence all) $\al>0,$ there exists a constant $\ga:=\ga(\al)>0$ such that
  \begin{align}\label{Estimate-AC}
      \int^\al_0 \|\mathfrak{C} \mathfrak{T}(t)x\|^2_{\partial\mathfrak{X}}dt\le \ga^2 |x|^2
      \end{align}
      for any $x\in D(\mathfrak{A})$. In this case, we also say that $(\mathfrak{C},\mathfrak{A})$ is admissible.
\end{definition}
Now if $(\mathfrak{C},\mathfrak{A})$ is admissible then the system \eqref{AC-dist} is well-posed. In fact, the estimate \eqref{Estimate-AC} together with the density of $D(\mathfrak{A})$ in $\mathfrak{X}$ imply that the following map
\begin{align}\label{psi}
\Psi^{\mathfrak{A},\mathfrak{C}}: D(\mathfrak{A})\to L^2([0,\al],\partial\mathfrak{X}),\quad x\mapsto \Psi^{\mathfrak{A},\mathfrak{C}} x:=\mathfrak{C}\mathfrak{T}(\cdot)x=y(\cdot;x)
\end{align}
has a unique linear bounded extension to $\mathfrak{X}$ (denoted by the same symbol) $\Psi^{\mathfrak{A},\mathfrak{C}}\in\calL(\mathfrak{X},L^2([0,\al],\partial\mathfrak{X}))$. In order to give a representation of $\Psi^{\mathfrak{A},\mathfrak{C}},$ let us  consider the following operator (called the {\em Yosida extension} of $\mathfrak{C}$ for $\mathfrak{A}$),
\begin{align}\label{Yos-exten}
\begin{split}
D(\mathfrak{C}_\Lambda)&=\left\{ z\in \mathfrak{X}:\lim_{\la\to+\infty}\mathfrak{C} \la R(\la,\mathfrak{A})z\;\text{exists in}\;\partial\mathfrak{X}\right\}\cr \mathfrak{C}_\Lambda z&:= \lim_{\la\to+\infty}\mathfrak{C} \la R(\la,\mathfrak{A})z.
\end{split}
\end{align}
It is shown in \cite{Weiss3} that if $\mathfrak{C}$ is an admissible observation operator for $\mathfrak{A}$, then
\begin{align}\label{relation-Psi}
\mathfrak{T}(t)x\in D(\mathfrak{C}_\Lambda),\quad\text{and}\quad (\Psi^{\mathfrak{A},\mathfrak{C}} x)(t)=\mathfrak{C}_\Lambda \mathfrak{T}(t)x,\quad \forall x\in \mathfrak{X},\; a.e.\; t>0.
\end{align}
Thus if $(\mathfrak{C},\mathfrak{A})$ is well-posed, then the extended output function of the system \eqref{AC-dist} is
\begin{align*}
y(t;x)=\mathfrak{C}_\Lambda \mathfrak{T}(t)x
\end{align*}
for any $x\in X,$ and a.e. $t>0$.
\subsection{Admissible control operators}\label{sub-b}

Consider the following  boundary control system
\begin{align}\label{AB}
\begin{cases}
\dot{z}(t)=\mathfrak{A}_m z(t),\quad z(0)=x,& t>0,\cr \Theta z(t)=u(t),& t\ge 0,
\end{cases}
\end{align}
where $u:[0,+\infty)\to \partial\mathfrak{X}$ is the control function assumed to be a $2$-integrable function. According to \cite{Greiner}, the fact that $\Theta$ is surjective implies that the domain $D(\mathfrak{A}_m)$ can be viewed as the direct sum of $D(\mathfrak{A})$ and $\ker(\la-\mathfrak{A}_m)$ for any $\la\in\rho(\mathfrak{A})$. In particular, the following inverse exists \begin{equation*}
  \mathfrak{D}_{\lambda}:=(\Theta_{|\ker(\lambda-\mathfrak{A}_m)})^{-1}:\partial\mathfrak{X}\rightarrow \ker(\lambda-\mathfrak{A}_m),\qquad \la\in\rho(\mathfrak{A}).
\end{equation*}
The operator $\mathfrak{D}_\la$ is called the {\em Dirichlet operator} associated with $\mathfrak{A}_m$ and $\Theta$, and satisfies $\mathfrak{D}_{\lambda}\in\calL(\partial\mathfrak{X},\mathfrak{X})$ for any $\la\in \rho(\mathfrak{A})$.  We select the following operator
\begin{align}\label{B-hadd-rhandi}
\mathfrak{B}:=(\la-\mathfrak{A}_{-1})\mathfrak{D}_\la\in\calL(\partial\mathfrak{X},\mathfrak{X}_{-1}),
\end{align}
where $\mathfrak{A}_{-1}:\mathfrak{X}\to \mathfrak{X}$ is the generator of the extrapolation semigroup extension of $(\mathfrak{T}(t))_{t\ge 0}$ (see notations in the introductory section). As for any $v\in \partial\mathfrak{X}$ and $\la\in\rho(\mathfrak{A}),$ we have $\mathfrak{A}_m \mathfrak{D}_{\lambda} v=\la \mathfrak{D}_{\lambda} v,$ then $(\mathfrak{A}_m -\mathfrak{A}_{-1})\mathfrak{D}_{\lambda} v=(\la-\mathfrak{A}_{-1})\mathfrak{D}_{\lambda} v=\mathfrak{B}v$. As $\mathfrak{D}_\la$ is the inverse of $\Theta,$ we obtain
\begin{align}\label{sum}
\mathfrak{A}_m= (\mathfrak{A}_{-1}+\mathfrak{B}\Theta)_{|D(\mathfrak{A}_m)}.
\end{align}
Using the equality \eqref{sum}, the boundary system \eqref{ABC} can be reformulated as
\begin{align}\label{AB-dist}
\dot{z}(t)=\mathfrak{A} z(t)+\mathfrak{B} u(t),\quad z(0)=x,\; t>0.
\end{align}
The mild (or integral) solution of the nonhomogeneous equation  \eqref{AB-dist} is given by
\begin{align*}
z(t)=\mathfrak{T}(t)x+\int^t_0 \mathfrak{T}_{-1}(t-s)\mathfrak{B}u(s)ds
\end{align*}
for any $t\ge 0,\;x\in\mathfrak{X}$ and $u\in L^2([0,+\infty),\partial\mathfrak{X})$. This solution satisfies  $z(t)\in \mathfrak{X}_{-1}$ for $t>0$. Usually, we are interested in having $\mathfrak{X}$-valued solution of the equation \eqref{AB-dist}. To that purpose, we need the following definition.
\begin{definition}\label{Amisssible-control}
The operator $\mathfrak{B}\in\calL(\partial\mathfrak{X},\mathfrak{X}_{-1})$ is called an admissible control operator for $\mathfrak{A}$ if there exists $\tau>0$ such that
\begin{align*}
\Phi^{\mathfrak{A},\mathfrak{B}}_{\tau} u:=\int^{\tau}_0 \mathfrak{T}_{-1}(\tau-s)\mathfrak{B}u(s)ds\in\mathfrak{X}
\end{align*}
for any $u\in L^2([0,+\infty,\partial\mathfrak{X})$. In this case, we also say that $(\mathfrak{A},\mathfrak{B})$ is admissible.
\end{definition}
Using the closed graph theorem, it is shown in \cite{Weiss2} that if $(\mathfrak{A},\mathfrak{B})$ is admissible, then
\begin{align*}
\Phi^{\mathfrak{A},\mathfrak{B}}_{t}\in \calL (L^2([0,+\infty),\partial\mathfrak{X}), \mathfrak{X}),\qquad \forall t\ge 0.
\end{align*}
Thus if $\mathfrak{B}$ is an admissible control operator for $\mathfrak{A},$ the solution of \eqref{AB-dist} is continuous function from $[0,+\infty)$ to $\mathfrak{X}$.

\subsection{Well-posed linear systems}\label{sub-bc}
The object of this part it to define the concept of well-posedess for boundary control systems with a point observation. In fact, we consider the augmented system
\begin{align}\label{ABC}
\begin{cases}
\dot{z}(t)=\mathfrak{A}_m z(t),\quad z(0)=x,& t>0,\cr \Theta z(t)=u(t),& t\ge 0,\cr y(t)=\Upsilon z(t),& t\ge 0,
\end{cases}
\end{align}
where the operators  $\mathfrak{A}_m,\Theta$ and $\Upsilon$ are defined before as above. According to the previous two subsection, the system \eqref{ABC} can be reformulated as the following input-output distributed linear system
\begin{align}\label{ABC-dist}
\begin{cases}
\dot{z}(t)=\mathfrak{A} z(t)+\mathfrak{B} u(t),\quad z(0)=x,& t>0,\cr y(t)=\mathfrak{C}z(t),& t\ge 0,
\end{cases}
\end{align}
where the operators $\mathfrak{A},\mathfrak{C}$ and $\mathfrak{B}$ are given by \eqref{A}, \eqref{C} and \eqref{B-hadd-rhandi}, respectively.
\begin{definition}\label{well-posed-def}
We say that the system \eqref{ABC-dist} (or \eqref{ABC}) is well-posed if it admits a continuous mild solution $z:[0,+\infty)\to \mathfrak{X}$ and an output function $t\mapsto y(t;x,u)$ that can be extended to a locally $2$-integrable function on $[0,+\infty)$ such that
\begin{align*}
\|y(\cdot;x,u)\|_{L^2([0,\al],\partial\mathfrak{X})}\le \kappa \left(|x|+\|u\|_{L^2([0,\al],\partial\mathfrak{X})}\right)
\end{align*}
for any $x\in X,\al>0$ and $u\in L^2([0,\al],\partial\mathfrak{X})$, where $\kappa:=\kappa(\al)>0$ is a constant.
\end{definition}
\begin{remark}\label{R1}
 (i) As discussed in Subsection \ref{sub-b}, a sufficient condition for the existence of a continuous solution $z:[0,+\infty)\to \mathfrak{X}$ of the differential equation in \eqref{ABC-dist} is that $(\mathfrak{A},\mathfrak{B})$ is well-posed. \\
(ii) Even if $(\mathfrak{A},\mathfrak{B})$ is well-posed,  the expression $"\mathfrak{C}z(t)"$ is not well defined for a general initial condition $x\in \mathfrak{X}$ and all control $u\in L^2([0,+\infty),\partial\mathfrak{C})$. To overcome this obstacle, for any $t>0,$ we define the space
\begin{align*}
u\in W^{2,2}_{0,t}(\partial\mathfrak{X}):=\{u\in W^{2,2}([0,t],\partial\mathfrak{X}):u(0)=\dot{u}(0)=0\}.
\end{align*}
This space is dense in $L^2([0,t],\partial\mathfrak{X})$. Now assume (without loss of generality) $0\in\rho(\mathfrak{A})$ and take $u\in W^{2,2}_{0,t}(\partial\mathfrak{X})$. Then an integration by parts implies
\begin{align*}
\Phi^{\mathfrak{A},\mathfrak{B}}_t u&=\int^t_0 \mathfrak{T}_{-1}(t-s)(-\mathfrak{A}_{-1})\mathfrak{D}_0 u(s)ds\cr &= \left[ \mathfrak{T}(t-s)\mathfrak{D}_0 u(s)\right]^{s=t}_{s=0}-\int^t_0 \mathfrak{T}(t-s)\mathfrak{D}_0 \dot{u}(s)ds \cr &= \mathfrak{D}_0 u(t)-\int^t_0 \mathfrak{T}(t-s)\mathfrak{D}_0 \dot{u}(s)ds.
\end{align*}
Thus $\Phi^{\mathfrak{A},\mathfrak{B}}_t u\in D(\mathfrak{A}_m)$ for $t\ge  0$. Moreover, for any $(x,u)\in D(\mathfrak{A})\times W^{2,2}_{0,\al}(\partial\mathfrak{X}),$ for any $t\in [0,\al],$ we have
\begin{align*}
y(t)&=\Upsilon (\mathfrak{T}(t)x+\Phi^{\mathfrak{A},\mathfrak{B}}_t u)\cr & = \mathfrak{C}\mathfrak{T}(t)x+ \Upsilon \Phi^{\mathfrak{A},\mathfrak{B}}_t u \cr & =\left(\Psi^{\mathfrak{A},\mathfrak{C}} x+ {\bf F}^{\mathfrak{A},\mathfrak{B},\mathfrak{C}}u\right)(t),
\end{align*}
where $\Psi^{\mathfrak{A},\mathfrak{A}}$ is defined in \eqref{psi} and
\begin{align}\label{F}
\left({\bf F}^{\mathfrak{A},\mathfrak{B},\mathfrak{C}}u\right)(t)= \Upsilon \Phi^{\mathfrak{A},\mathfrak{B}}_t u.
\end{align}
\end{remark}
By combining Definition \ref{well-posed-def} and Remark \ref{R1} one can easily prove the following result.
\begin{proposition}\label{abc-well-posed}
 The input-output system \eqref{ABC-dist} (or \eqref{ABC}) is well-posed if and only if the following assertions hold
\begin{itemize}
  \item [{\rm (i)}] $(\mathfrak{A},\mathfrak{B})$ is admissible.
  \item [{\rm (ii)}] $(\mathfrak{C},\mathfrak{A})$ is admissible.
  \item [{\rm (iii)}] There exists a constant $\delta>0$ such that
\begin{align}\label{ineq-F}
\left\|{\bf F}^{\mathfrak{A},\mathfrak{B},\mathfrak{C}}u\right\|_{L^2([0,\al],\partial\mathfrak{X})}\le \kappa\|u\|_{L^2([0,\al],\partial\mathfrak{X})}
  \end{align}
  for any $u\in W^{2,2}_{0,\al}(\partial\mathfrak{X})$, where ${\bf F}^{\mathfrak{A},\mathfrak{B}}$ is the linear operator define in \eqref{F}. In this case we also say that $(\mathfrak{A},\mathfrak{B},\mathfrak{C})$ is well-posed triple.
\end{itemize}
\end{proposition}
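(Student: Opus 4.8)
The plan is to exploit the linearity of the output map together with the explicit splitting $y=\Psi^{\mathfrak{A},\mathfrak{C}}x+{\bf F}^{\mathfrak{A},\mathfrak{B},\mathfrak{C}}u$ established in Remark \ref{R1}, and to prove the two implications separately, relying throughout on the density of $D(\mathfrak{A})$ in $\mathfrak{X}$ and of $W^{2,2}_{0,\al}(\partial\mathfrak{X})$ in $L^2([0,\al],\partial\mathfrak{X})$. The whole argument is essentially a matter of isolating each of the three conditions by specializing the data and then reassembling them by a density/extension argument.

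For the necessity part (well-posedness implies (i)--(iii)), I would first note that, by Definition \ref{well-posed-def}, well-posedness furnishes a continuous $\mathfrak{X}$-valued mild solution $z(t)=\mathfrak{T}(t)x+\Phi^{\mathfrak{A},\mathfrak{B}}_t u$ for every $x\in\mathfrak{X}$ and $u\in L^2([0,\al],\partial\mathfrak{X})$; since $\mathfrak{T}(\cdot)x$ is already continuous and $\mathfrak{X}$-valued, this forces $\Phi^{\mathfrak{A},\mathfrak{B}}_t u\in\mathfrak{X}$ for all such $u$, which is precisely the admissibility of $(\mathfrak{A},\mathfrak{B})$ in the sense of Definition \ref{Amisssible-control}, giving (i). To extract (ii) and (iii) I would specialize the output estimate of Definition \ref{well-posed-def} using the splitting of Remark \ref{R1}: taking $u=0$ and $x\in D(\mathfrak{A})$ yields $\|\Psi^{\mathfrak{A},\mathfrak{C}}x\|_{L^2([0,\al],\partial\mathfrak{X})}\le\kappa|x|$, i.e. the admissibility of $(\mathfrak{C},\mathfrak{A})$; taking $x=0\in D(\mathfrak{A})$ and $u\in W^{2,2}_{0,\al}(\partial\mathfrak{X})$ yields exactly the bound \eqref{ineq-F}.

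For the sufficiency part, condition (i) guarantees, via Remark \ref{R1}(i), a continuous mild solution $z:[0,+\infty)\to\mathfrak{X}$. Condition (ii) means that $\Psi^{\mathfrak{A},\mathfrak{C}}$ extends to a bounded operator on $\mathfrak{X}$ with $\|\Psi^{\mathfrak{A},\mathfrak{C}}x\|_{L^2([0,\al],\partial\mathfrak{X})}\le\ga|x|$, while condition (iii) bounds ${\bf F}^{\mathfrak{A},\mathfrak{B},\mathfrak{C}}$ on the dense subspace $W^{2,2}_{0,\al}(\partial\mathfrak{X})$, so that it extends to a bounded operator on $L^2([0,\al],\partial\mathfrak{X})$. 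Since the identity $y=\Psi^{\mathfrak{A},\mathfrak{C}}x+{\bf F}^{\mathfrak{A},\mathfrak{B},\mathfrak{C}}u$ holds on the dense product set $D(\mathfrak{A})\times W^{2,2}_{0,\al}(\partial\mathfrak{X})$, it extends by continuity to all of $\mathfrak{X}\times L^2([0,\al],\partial\mathfrak{X})$, and the triangle inequality then gives $\|y(\cdot;x,u)\|_{L^2([0,\al],\partial\mathfrak{X})}\le\ga|x|+\kappa\|u\|_{L^2([0,\al],\partial\mathfrak{X})}\le\max(\ga,\kappa)(|x|+\|u\|_{L^2([0,\al],\partial\mathfrak{X})})$, which is the required output estimate, hence well-posedness.

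The algebraic manipulations are routine; the step requiring the most care is the extension argument in the sufficiency direction. There one must check that the two independent extensions (of $\Psi^{\mathfrak{A},\mathfrak{C}}$ to $\mathfrak{X}$ and of ${\bf F}^{\mathfrak{A},\mathfrak{B},\mathfrak{C}}$ to $L^2$) are compatible with the pointwise identity of Remark \ref{R1} on the dense product subspace, and that the resulting estimate holds with a single constant independent of $(x,u)$. One should also verify that the specializations $u=0$ and $x=0$ used in the necessity part are legitimate admissible data, and that the continuity requirement built into Definition \ref{well-posed-def}, although formally stronger than the single-time condition in Definition \ref{Amisssible-control}, does indeed reduce to the admissibility of $(\mathfrak{A},\mathfrak{B})$.
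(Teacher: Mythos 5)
Your proof is correct and is precisely the argument the paper intends: the paper offers no detailed proof, saying only that the result follows ``by combining Definition \ref{well-posed-def} and Remark \ref{R1}'', and your specializations ($u=0$ with $x\in D(\mathfrak{A})$ for (ii), $x=0$ with $u\in W^{2,2}_{0,\al}(\partial\mathfrak{X})$ for (iii), and the identification $\Phi^{\mathfrak{A},\mathfrak{B}}_t u=z(t)-\mathfrak{T}(t)x\in\mathfrak{X}$ for (i)) together with the density/extension argument for sufficiency are exactly that combination. No gaps; the compatibility and uniform-constant checks you flag are routine and resolve as you expect.
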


If the system \eqref{ABC-dist} is well-posed, then the density of $W^{2,2}_{0,\al}(\partial\mathfrak{X})$ in $L^2([0,\al],\partial\mathfrak{X})$ and the inequality \eqref{ineq-F} imply that the operator ${\bf F}^{\mathfrak{A},\mathfrak{B},\mathfrak{C}}$ is extended to a linear bounded operator (denoted by the same symbol) ${\bf F}^{\mathfrak{A},\mathfrak{B},\mathfrak{C}}\in\calL(L^p([0,\al],\partial\mathfrak{X}))$ for any $\al>0$. Furthermore, the extended output function of the system \eqref{ABC-dist} is given by
\begin{align}\label{yyy}
\begin{split}y(t;x,u)&= \left(\Psi^{\mathfrak{A},\mathfrak{C}} x+\textbf{F}^{\mathfrak{A},\mathfrak{B},\mathfrak{C}}u\right)(t)\cr &=
\mathfrak{C}_\Lambda \mathfrak{T}(t)x+\left(\textbf{F}^{\mathfrak{A},\mathfrak{B},\mathfrak{C}}u\right)(t)\end{split}\end{align} for a.e. $t>0,$ and all $(x,u)\in \mathfrak{X}\times L^2_{loc}([0,+\infty),\partial\mathfrak{X})$.

%This property is very important if one wants to study feedback theory and stabilization of infinite dimensional linear systems.
%\begin{definition}\label{Def-well-posed-sys}
%The boundary system \eqref{ABC} (or equivalently the system \eqref{ABC-dist}) is called well-posed if the triple $(\mathfrak{A},\mathfrak{B},\mathfrak{C})$ is so.
%\end{definition}
%To give more properties of well-posed systems,
%Thus if the system \eqref{ABC-dist} is well-posed then its output function is given by
%\begin{align}\label{yyy}
%y(t;x,u)=\mathfrak{C}_\Lambda \mathfrak{T}(t)x+\left(\textbf{F}^{\mathfrak{A},\mathfrak{B},\mathfrak{C}}u\right)(t)
%\end{align}
%for a.e. $t>0,$ and all $(x,u)\in \mathfrak{X}\times L^2_{loc}([0,+\infty),\partial\mathfrak{X})$.

Next, in order to give a complete representation of the output function $y(\cdot)$ of the system \eqref{ABC-dist}, we need the following subclass of well-posed systems.
\begin{definition}\label{regular-triple}
 A well-posed triple $(\mathfrak{A},\mathfrak{B},\mathfrak{C})$ is called regular if
\begin{align*}
\lim_{t\to 0^+}\frac{1}{t} \int^t_0 \left({\bf F}^{\mathfrak{A},\mathfrak{B},\mathfrak{C}}\1_{[0,+\infty)}v_0\right)(s)ds=0
\end{align*}
for any $v_0\in \partial\mathfrak{X}$.
\end{definition}
This definition is also equivalent to  ${\rm Range}(\mathfrak{D}_\la)\subset D(\mathfrak{C}_\Lambda)$ for some (hence all) $\la\in\rho(\mathfrak{A})$.

According to \cite[Lem.3.6]{Hadd Manzo Ghandi}, if $(\mathfrak{A},\mathfrak{B},\mathfrak{C})$ is regular, then
\begin{align}\label{Hadd-hadd}
D(\mathfrak{A}_m)\subset D(\mathfrak{C}_\Lambda)\quad\text{and}\quad (\mathfrak{C}_\Lambda)_{|D(\mathfrak{A}_m)}=\Upsilon.
\end{align}
Furthermore, due to \cite{WeiRegu}, we also have
\begin{align*}
  \Phi^{\mathfrak{A},\mathfrak{B}}_tu\in D(\mathfrak{C}_\Lambda)\quad \text{and}\quad \left({\bf F}^{\mathfrak{A},\mathfrak{B},\mathfrak{C}}u\right)(t)=\mathfrak{C}_{\Lambda}\Phi^{\calA,\calB}_t u
  \end{align*}
  for a.e. $t\ge 0$ and all $u\in L^2([0,+\infty),\partial\mathfrak{X})$. Thus, in this case, the state trajectory of the system \eqref{ABC-dist} (equivalently of the system \eqref{ABC}) $z(t)=\mathfrak{T}(t)x+\Phi^{\mathfrak{A},\mathfrak{B}}_tu$ satisfies $z(t)\in D(\mathfrak{C}_\Lambda)$  and $y(t;x,u)=\mathfrak{C}_{\Lambda} z(t)$ for a.e. $t>0$ and all $(x,u)\in \mathfrak{X}\times L^2_{loc}([0,+\infty),\partial\mathfrak{X})$, due to \eqref{yyy}.

  In the following example we provide a regular system associated with the left shift semigroup on a Lebesque space. This example is important in the reformulation of the input delay stochastic system \eqref{stoc-inputoutput} to a free-delay stochastic system in product spaces (see Section \ref{sec:3}).

\begin{example}\label{shift-example}
Let $\mathscr{U}$ be a Hilbert space. On the Lebesgue space  $L^2([-r,0],\mathscr{U})$ (with $r>0$ is a fixed real number), we consider the operator $Q_m g=g'$ (the fist derivative) for $g\in W^{1,2}([-r,0],\mathscr{U})$. This operator is closed. We consider the dirac operator at zero $\delta_0:W^{1,2}([-r,0],\mathscr{U})\to \mathscr{U}$ such that $\delta_0 g=g(0),$ which is a surjective operator. Now consider the operator
\begin{align*}
Q g=g',\quad D(Q)=\ker\delta_0=\left\{g\in W^{1,2}([-r,0],\mathscr{U}):g(0)=0\right\}.
\end{align*}
It is well known (see e.g. \cite[Chap.II]{Engel-Nagel}) that $Q$ generates the left shift semigroup $S:=(S(t))_{t\ge 0}$ on $L^2([-r,0],\mathscr{U})$ defined by
\begin{align*}
(S(t)g)(\theta)=\begin{cases} 0,& -t\le \theta \le 0,\cr g(t+\theta),& -r\le \theta < -t.\end{cases}
\end{align*}
The Dirichlet operator associated with $Q_m$ and $\delta$ is $d_\la=e_\la$ for $\la\in\rho(Q)=\C,$ where $e_\la: \mathscr{U}\to L^2([-r,0],\mathscr{U})$ is such that $(e_\la v)(\theta)=e^{\la\theta}v$ for any $\theta\in [-r,0]$ and $v\in\mathscr{U}$. We select $\beta:=(\la-Q_{-1})d_\la\in\calL\left(\mathscr{U},(L^2([-r,0],\mathscr{U}))_{-1}\right)$. It is known that $\beta$ is an admissible control operator for $Q$ and the associated control maps are given by
\begin{align*}
\left(\Phi^{Q,\beta}_t u\right)(\theta)=\begin{cases} u(t+\theta),& -t\le \theta \le 0,\cr 0,& -r\le \theta < -t,\end{cases}
\end{align*}
for any $u\in L^2([0,+\infty),\mathscr{U})$, see \cite{HIR:06}. Now we define
\begin{align}\label{L}
L g=\int^0_{-r}d\mu(\theta)g(\theta),\qquad g\in W^{1,2}([-r,0],\mathscr{U}),
\end{align}
where $\mu:[-r,0]\to\calL(\mathscr{U})$ is a function of bounded variation with total variation $|\mu|$ satisfying $|\mu|([-\varepsilon,0])\to 0$ as $\varepsilon\to 0$. If we denote by $\L:=L_{|D(Q)}$ then $\L$ is an admissible observation operator for $Q,$ see \cite[Lem.6.2]{Hadd-SF}. Further more, according to \cite{HIR:06}, the triple $(Q,\beta,\L)$ is regular.
\end{example}

\section{Well-posedness of boundary input delay stochastic systems}\label{sec:3}
Throughout this section assume the following setting for the system \eqref{stoc-inputoutput}. The operators $A_m,G,\mathscr{M}$ and $\mathscr{C}$ are as in the introductory section. In addition, we suppose that $A:=(A_m)_{|D(A)}$ with $D(A)=\ker G$  generates  a strongly continuous semigroup $T:=(T(t))_{t\geq0}$ on the space $H$ and the boundary operator $G:\mathscr{Z}\to \mathscr{U}$ is surjective. Similarly to Section \ref{sec:2}, we consider the following Dirichlet operator associated with $A_m$ and $G,$ \begin{equation*}
  \mathbb{D}_{\lambda}:=(G_{|\ker(\lambda-A_{m})})^{-1}:\mathscr{U}\rightarrow \ker(\lambda-A_{m}),\qquad \la\in\rho(A).
\end{equation*}
Thus we have $\D_\la\in\calL(\mathscr{U},H)$ for any $\la\in \rho(A)$.  We select the following operators
\begin{align}\label{BC}
B:=(\la-A_{-1})\D_\la\in\calL(\mathscr{U},H_{-1})\quad\text{and}\quad C:=\mathscr{C}_{|D(A)}\in\calL(D(A),\mathscr{Y}).
\end{align}
With these notation we can write
\begin{align}\label{Am=A+BG}
A_m=\left( A_{-1}+BG\right)_{|\mathscr{Z}}.
\end{align}
For the control process $U:[-r,\infty)\times \Om\to \mathscr{U}$ and for each $t\ge 0,$ we denote by $U_t:[-r,0]\times \Om\to \mathscr{U}$ the history of $U$ defined by $U_t(\theta,\om)=U(t+\theta,\om)$ for a.e. $\theta\in [-r,0]$ and $\P$-a.s $\om\in \Om$.  Now we set $V(t,\theta,\om):=U_t(\theta,\om)$ for $t\ge 0,\;\theta\in [-r,0]$ and $\P$-a.s $\om\in\Om$. This process satisfies the following boundary control problem
\begin{align}\label{Shift-BC}
\begin{split}
& \frac{\partial}{\partial t}V(t,\cdot)=\frac{\partial}{\partial \theta}V(t,\cdot),\quad V(0,\cdot)=\varphi,\quad t>0,\cr
& V(t,0)=U(t),\quad t>0.
\end{split}
\end{align}
In the sequel we will transform the input delay system \eqref{stoc-inputoutput} to a free delay boundary control system. For this, we define the product space
\begin{align*}
\mathscr{H}=H\times L^2([-r,0],\mathscr{U})\quad \text{with norm}\quad \left\|\left(\begin{smallmatrix}x\\g\end{smallmatrix}\right)\right\|=\|x\|+\|g\|_2.
\end{align*}
We introduce the new state
\begin{align}\label{new-state}
Z(t)=\begin{pmatrix}X(t)\\U_t\end{pmatrix},\qquad t\ge 0.
\end{align}
We also define the following operators
\begin{align*}
& \mathscr{A}_m:=\begin{pmatrix}A_m&0\\0&Q_m\end{pmatrix},\quad D(\mathscr{A}_m):=\left\{\left(\begin{smallmatrix}x\\g\end{smallmatrix}\right)\in \mathscr{Z}\times W^{1,2}([-r,0],\mathscr{U}):Gx=Lg\right\},\cr
& \mathscr{G}=(0\;\;\delta_0):D(\mathscr{A}_m)\to \mathscr{U},\cr
&\mathscr{M}(\begin{smallmatrix}x\\g\end{smallmatrix})=\left(\begin{smallmatrix}Mx\\0\end{smallmatrix}\right),\quad (\begin{smallmatrix}x\\g\end{smallmatrix})\in\mathscr{H}, \cr & \mathscr{N}:=(\mathscr{C}\;\;0):D(\mathscr{A}_m)\to \mathscr{Y},
\end{align*}
where the operators $Q_m$ and $L$ are defined in Example \ref{shift-example}. Using these operators and  the system \eqref{Shift-BC}, the initial input delay system  \eqref{stoc-inputoutput} becomes
\begin{align}\label{Trans-1}
\begin{cases}dZ(t)=\mathscr{A}_m Z(t)dt+\mathscr{M}(Z(t))dW(t),\quad Z(0)=(\begin{smallmatrix}\xi\\\varphi\end{smallmatrix}),& t>0,\cr \mathscr{G}Z(t)=U(t),& t\ge 0,\cr Y(t)=\mathscr{N}Z(t),& t\ge 0.\end{cases}
\end{align}
In order to master the system \eqref{Trans-1}, it is more convenient to reformulate it as a distributed system on $\mathscr{H},\mathscr{U}$ and $\mathscr{Y}$. We will use the work \cite{Greiner}.  We need the following lemmas.
\begin{lemma}\label{L1}
 The operator $\mathscr{G}:D(\mathscr{A}_m)\to \mathscr{U}$ is surjective.
\end{lemma}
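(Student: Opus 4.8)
The plan is to prove surjectivity directly by constructing, for each prescribed boundary value $v\in\mathscr{U}$, an element of $D(\mathscr{A}_m)$ that $\mathscr{G}$ sends to $v$. Recall that $\mathscr{G}\left(\begin{smallmatrix}x\\g\end{smallmatrix}\right)=\delta_0 g=g(0)$, so the task is to produce a pair $\left(\begin{smallmatrix}x\\g\end{smallmatrix}\right)\in\mathscr{Z}\times W^{1,2}([-r,0],\mathscr{U})$ that satisfies the coupling constraint $Gx=Lg$ defining $D(\mathscr{A}_m)$ and additionally $g(0)=v$. The guiding idea is to decouple these two requirements: first use the free second component $g$ to realize the target value $v$, then absorb the resulting constraint into the hypothesis that $G$ is surjective.

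First I would fix the second component to hit the prescribed value. Given $v\in\mathscr{U}$, take $g\in W^{1,2}([-r,0],\mathscr{U})$ to be the constant function $g(\theta)\equiv v$; this clearly lies in $W^{1,2}([-r,0],\mathscr{U})$ (with $g'=0$) and satisfies $g(0)=v$. Any other $\delta_0$-lift of $v$ would serve equally well (for instance the Dirichlet element $e_0 v$ from Example \ref{shift-example}), the sole requirement being $g(0)=v$. Next I would form $Lg=\int^0_{-r}d\mu(\theta)\,g(\theta)\in\mathscr{U}$, which is a well-defined element of $\mathscr{U}$ since $\mu$ is of bounded variation and $g$ is continuous.

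The crux of the argument is then to invoke the standing assumption that the boundary operator $G:\mathscr{Z}\to\mathscr{U}$ is surjective: this lets me choose $x\in\mathscr{Z}$ with $Gx=Lg$. With this choice the constraint $Gx=Lg$ holds by construction, so $\left(\begin{smallmatrix}x\\g\end{smallmatrix}\right)\in D(\mathscr{A}_m)$, and $\mathscr{G}\left(\begin{smallmatrix}x\\g\end{smallmatrix}\right)=g(0)=v$. As $v\in\mathscr{U}$ was arbitrary, $\mathscr{G}$ is surjective. I do not anticipate a genuine obstacle: the entire content is the observation that the surjectivity of $G$ exactly counterbalances the coupling relation $Gx=Lg$, so that the value $g(0)$ remains completely free. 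The only points deserving a word of care are to confirm that the chosen $g$ belongs to $W^{1,2}([-r,0],\mathscr{U})$ and that $Lg$ is a legitimate element of $\mathscr{U}$, both of which are immediate.
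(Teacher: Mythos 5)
Your proof is correct and follows essentially the same route as the paper: lift $v$ to a function $g$ with $g(0)=v$ (you use the constant function $e_0v$, the paper uses $e_\lambda v$), then invoke the surjectivity of $G$ to find $x\in\mathscr{Z}$ with $Gx=Lg$, so that $\left(\begin{smallmatrix}x\\g\end{smallmatrix}\right)\in D(\mathscr{A}_m)$ maps to $v$ under $\mathscr{G}$. The choice of lift is immaterial, so the two arguments coincide.
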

\begin{proof}Let $v\in \mathscr{U}$ and set $g=e_\la v$, so $g\in W^{1,2}([-r,0],\mathscr{U})$ and $\delta_0 g=v$. On the other hand, as $G$ is surjective and $Lg\in U$ then there exists $x\in\mathscr{Z}$ such that $Gx=Lg$. This proves that $(\begin{smallmatrix}x\\g\end{smallmatrix})\in D(\mathscr{A}_m)$ and $\mathscr{G}(\begin{smallmatrix}x\\g\end{smallmatrix})=v$.
\end{proof}
\begin{lemma}\label{L2}The operator $\mathscr{A}_m$ is closed.
\end{lemma}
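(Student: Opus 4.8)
The plan is to present $\mathscr{A}_m$ as the restriction of a manifestly closed block-diagonal operator to a subspace that is closed in the graph norm, and then to use the elementary fact that such a restriction stays closed. First I would introduce the uncoupled operator $\mathscr{A}_0:=\left(\begin{smallmatrix}A_m&0\\0&Q_m\end{smallmatrix}\right)$ with the full (uncoupled) domain $D(\mathscr{A}_0):=\mathscr{Z}\times W^{1,2}([-r,0],\mathscr{U})$. Since $A_m$ is closed by hypothesis and $Q_m$ is closed (Example \ref{shift-example}), their direct sum $\mathscr{A}_0$ is closed on $\mathscr{H}$; equivalently $\bigl(D(\mathscr{A}_0),\|\cdot\|_{\mathscr{A}_0}\bigr)$ is a Banach space, its graph norm $\|(\begin{smallmatrix}x\\g\end{smallmatrix})\|_{\mathscr{A}_0}=\|x\|+\|A_m x\|+\|g\|_2+\|g'\|_2$ being equivalent to $(\|x\|+\|A_m x\|)+\|g\|_{W^{1,2}}$.

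Next I would isolate the coupling through the boundary defect map $\Xi(\begin{smallmatrix}x\\g\end{smallmatrix}):=Gx-Lg$, so that $D(\mathscr{A}_m)=\ker\Xi$ and $\mathscr{A}_m=\mathscr{A}_0|_{\ker\Xi}$ by construction. The whole argument then reduces to showing that $\ker\Xi$ is closed in $\bigl(D(\mathscr{A}_0),\|\cdot\|_{\mathscr{A}_0}\bigr)$, and for this it is enough that $\Xi$ be continuous from this Banach space into $\mathscr{U}$. This continuity splits into two independent bounds. For $L$, the continuous Sobolev embedding $W^{1,2}([-r,0],\mathscr{U})\hookrightarrow C([-r,0],\mathscr{U})$ together with the finite total variation of $\mu$ gives $\|Lg\|\le|\mu|([-r,0])\,\|g\|_{C}\le c\,\|g\|_{W^{1,2}}$. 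For $G$, I would use the Greiner decomposition $x=R(\la,A)(\la-A_m)x+\mathbb{D}_\la Gx$, valid for every $x\in\mathscr{Z}$ and $\la\in\rho(A)$ (it is exactly the splitting $D(A_m)=\ker G\oplus\ker(\la-A_m)$ recalled in Subsection \ref{sub-b}); solving for $Gx$ gives $Gx=\mathbb{D}_\la^{-1}\bigl(x-R(\la,A)(\la-A_m)x\bigr)$, a composition of the graph-norm-bounded map $x\mapsto x-R(\la,A)(\la-A_m)x$ with $\mathbb{D}_\la^{-1}=G|_{\ker(\la-A_m)}$. The latter is bounded by the bounded inverse theorem, since $\ker(\la-A_m)$ is closed in $H$ (if $u_n\in\ker(\la-A_m)$ and $u_n\to u$ then $A_m u_n=\la u_n\to\la u$, so closedness of $A_m$ forces $u\in\ker(\la-A_m)$) and $\mathbb{D}_\la$ is a bounded bijection of $\mathscr{U}$ onto it; hence $\|Gx\|\le c'(\|x\|+\|A_m x\|)$.

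With both bounds in hand, $\Xi$ is continuous and $\ker\Xi=\Xi^{-1}\{0\}$ is closed in $\bigl(D(\mathscr{A}_0),\|\cdot\|_{\mathscr{A}_0}\bigr)$. I would then conclude directly: if $Z_n\in D(\mathscr{A}_m)$ with $Z_n\to Z$ and $\mathscr{A}_m Z_n\to W$ in $\mathscr{H}$, closedness of $\mathscr{A}_0$ yields $Z\in D(\mathscr{A}_0)$ with $\mathscr{A}_0 Z=W$, so that $Z_n\to Z$ in the graph norm and therefore $Z\in\ker\Xi=D(\mathscr{A}_m)$ with $\mathscr{A}_m Z=W$, proving closedness. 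The only step that is not pure bookkeeping — and the one I expect to need the most care — is the graph-norm continuity of $G$; the continuity of $L$ is immediate from the Sobolev embedding, and the product/restriction structure is purely formal.
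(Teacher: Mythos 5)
Your proof is correct, but it is organized genuinely differently from the paper's. The paper argues directly with sequences: given $(\begin{smallmatrix}x_n\\g_n\end{smallmatrix})\in D(\mathscr{A}_m)$ converging in graph norm, it first uses closedness of $A_m$ and $Q_m$ componentwise, and then verifies the coupling condition $Gx=Lg$ in the limit by writing $x_n=x_n^0+\D_\la Lg_n$ with $x_n^0\in D(A)$, proving $Lg_{n_k}\to Lg$ by hand (a Lebesgue-point choice plus H\"older, in effect an inline proof of the Sobolev embedding along a subsequence), and finally invoking closedness of $A$ to conclude $x-\D_\la Lg\in\ker G$. At no point does the paper assert that $G$ itself is continuous for the graph norm of $A_m$; closedness of $A$ substitutes for that. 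Your argument isolates exactly this continuity: you derive $\|Gx\|_{\mathscr{U}}\le c'(\|x\|+\|A_mx\|)$ from the Greiner decomposition $x=R(\la,A)(\la-A_m)x+\D_\la Gx$ together with the bounded inverse theorem applied to $\D_\la^{-1}=G_{|\ker(\la-A_m)}$ (legitimate, since $\ker(\la-A_m)$ is closed in $H$ by closedness of $A_m$, and $\D_\la\in\calL(\mathscr{U},H)$ is a bounded bijection onto it), and the analogous estimate for $L$ from $W^{1,2}([-r,0],\mathscr{U})\hookrightarrow C([-r,0],\mathscr{U})$; then $D(\mathscr{A}_m)$ is the kernel of the graph-norm continuous defect map $\Xi$, hence graph-norm closed in $D(\mathscr{A}_0)$, and closedness of $\mathscr{A}_m$ follows formally from closedness of the diagonal operator $\mathscr{A}_0$. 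Both proofs rest on the same two pillars (the Greiner splitting with $\D_\la$ bounded, and continuity of $L$ on $W^{1,2}$), but your packaging buys a reusable quantitative fact, namely that $G$ is relatively bounded with respect to $A_m$, so that $Gx_n\to Gx$ directly, and it replaces the paper's subsequence extraction by the standard Sobolev embedding, at the cost of one extra appeal to the open mapping theorem.
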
\begin{proof}Let $(\begin{smallmatrix}x_n\\g_n\end{smallmatrix})\in D(\mathscr{A}_m)$ and $(\begin{smallmatrix}x\\g\end{smallmatrix}),(\begin{smallmatrix}\overline{x}\\\overline{g}\end{smallmatrix})\in \mathscr{H}$ such that $\|x_n-x\|+\|g_n-g\|_2\to 0$  and $\|A_m x_n-\overline{x}\|+\|g'_n-\overline{g}\|_2\to 0$ as $n\to\infty$. As $(A_m,\mathscr{Z})$ and $(\frac{d}{d\theta},W^{1,2}([-r,0],\mathscr{U}))$ are closed then $x\in \mathscr{Z},$ $g\in W^{1,2}([-r,0],\mathscr{U}),$ and $A_mx=\overline{x},$ $g'=\overline{g}$. Moreover, for any $n$ $Gx_n=Lg_n$ and we can write $x_n=x_n^0+x_n^1$ with $x_n^0\in D(A)$ and $x_n^1\in \ker(\la-A_m)$. Thus $Gx_n^1=Lg_n,$ so that $x_n^1=\D_\la Lg_n$ for any $n$. Let show that $Lg_n\to Lg$ as $n\to\infty$. In fact, we know that there exists a set $N\subset [-r,0]$ with Lebsegue measure $\la(N)=0$ and $g_{n_k}(\theta)\to g(\theta)$ for any $\theta\in [-r,0]\backslash N$ as $k\to\infty$. Now choose $\theta_0\in [-r,0]\backslash N$ and write
\begin{align}\label{waw}
g_{n_k}= e_0 g_{n_k}(\theta_0)+h_{n_k}\quad \text{and}\quad g=e_0 g(\theta_0)+h
\end{align}
where
\begin{align*}
h_n(\theta)=\int^{\theta_0}_\theta (-g_n')(\si)d\si\quad\text{and}\quad h(\theta)=\int^{\theta_0}_\theta (-g')(\si)d\si.
\end{align*}
According to \eqref{waw},
\begin{align*}
\|Lg_{n_k}-Lg\|\le |\mu|([-r,0])\left (\|g_{n_k}(\theta_0)- g(\theta_0)\|+\|h_{n_k}-h\|_\infty\right).
\end{align*}
Using H\"older inequality, we obtain
\begin{align*}
\|h_n-h\|_\infty \le r^{\frac{1}{2}} \|g_n'-g'\|_{L^2([-r,0],\mathscr{U})}\to 0 \quad (n\to\infty).
\end{align*}
This implies that $Lg_{n_k}\to Lg$ as $k\to\infty$, so that $\D_\la Lg_n\to \D_\la Lg$. Thus $x_{n_k}^1\to \D_\la Lg$, so that $x_{n_k}^0\to x-\D_\la Lg$ as $k\to\infty$. On the other hand, $Ax_{n_k}^0=A_m x_{n_k}-A_m x_{n_k}^1=A_m x_{n_k}-\la x_{n_k}^1$ for any $k$. This implies that $Ax_{n_k}^0\to A_m x-\la \D_\la Lg$. As $A$ is closed then in particular $x-\D_\la Lg\in D(A)=\ker G,$ with means that $Gx=Lg$. Then $(\begin{smallmatrix}x\\g\end{smallmatrix})\in D(\mathscr{A}_m)$ and $\mathscr{A}_m(\begin{smallmatrix}x\\g\end{smallmatrix})=(\begin{smallmatrix}\overline{x}\\\overline{g}\end{smallmatrix})$. This shows that $\mathscr{A}_m$ is closed.
\end{proof}

Let $\L$ as in Example \ref{shift-example}.
\begin{proposition} \label{mathscrA}
Assume that $B$ is an admissible control operator for $A$. The following operator
\begin{align}\label{opera-mathscrA}
\mathscr{A}:=\left(\mathscr{A}_m\right)_{|D(\mathscr{A})}\quad \text{with}\quad D(\mathscr{A}):=\ker\mathscr{G}
\end{align}
generates a strongly continuous semigroup $\mathscr{T}:=(\mathscr{T}(t))_{t\ge 0}$ on $\mathscr{H}$ given by
\begin{align}\label{mathscrT}
\mathscr{T}(t)=\begin{pmatrix} T(t)& R(t)\\ 0& S(t)\end{pmatrix},\quad t\ge 0,
\end{align}
where $R(t):L^2([-r,0],\mathscr{U})\to H$ is defined by
\begin{align*}
R(t)g=\int^t_0 T_{-1}(t-s)B \L_{\Lambda}S(s)gds.
\end{align*}
\end{proposition}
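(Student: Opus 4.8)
The plan is to verify directly that the operator family $\mathscr{T}=(\mathscr{T}(t))_{t\ge 0}$ defined by \eqref{mathscrT} is a strongly continuous semigroup on $\mathscr{H}$, and then to identify its generator with $\mathscr{A}$ by computing and comparing resolvents. The upper-triangular block form reduces everything to three facts: the diagonal entries $T(t)$ and $S(t)$ are the $C_0$-semigroups generated by $A$ and $Q$ respectively (the former by hypothesis, the latter from Example~\ref{shift-example}), while the coupling term $R(t)$ must be shown to be a well-defined bounded operator from $L^2([-r,0],\mathscr{U})$ into $H$ satisfying the cocycle identity $R(t+s)=T(t)R(s)+R(t)S(s)$.

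First I would establish that $R(t)\in\calL(L^2([-r,0],\mathscr{U}),H)$. The key observation is the factorization $R(t)g=\Phi^{A,B}_t\big(\L_\Lambda S(\cdot)g\big)$, where $\L_\Lambda S(\cdot)g=\Psi^{Q,\L}g$ is the output map of the admissible observation operator $\L$ for $Q$ (so by \eqref{relation-Psi} the function $\L_\Lambda S(s)g$ is defined for a.e.\ $s$ and $\|\Psi^{Q,\L}g\|_{L^2([0,t],\mathscr{U})}\le \ga\|g\|_2$), and $\Phi^{A,B}_t$ is the input map of the admissible control operator $B$ for $A$, which is bounded from $L^2([0,t],\mathscr{U})$ into $H$. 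Composing these two admissibility estimates shows $R(t)g\in H$ with $\|R(t)g\|\le c\|g\|_2$, and strong continuity of $t\mapsto R(t)g$ follows from the continuity of $t\mapsto\Phi^{A,B}_t$. Together with the standard properties of $T(t)$ and $S(t)$ this gives $\mathscr{T}(t)\in\calL(\mathscr{H})$ with an exponential bound and $\mathscr{T}(0)=I$.

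Next I would prove the semigroup property, which amounts to the cocycle identity for $R$. Splitting the defining integral of $R(t+s)g$ at $\tau=s$, on the first piece I use $T_{-1}(t+(s-\tau))=T(t)\,T_{-1}(s-\tau)$ and pull $T(t)$ out of the integral — legitimate precisely because the inner integral already lies in $H$ by admissibility of $B$ — to obtain $T(t)R(s)g$; on the second piece the substitution $\tau=s+\si$ together with $\L_\Lambda S(s+\si)g=\L_\Lambda S(\si)\,S(s)g$ (the semigroup law for $S$ combined with the domain property of the Yosida extension) gives $R(t)S(s)g$. This is the delicate step: one must keep careful track of which quantities are $H$-valued and which are merely $H_{-1}$-valued, and justify the Yosida-extension manipulation under the integral sign; I expect it to be the main obstacle.

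Finally I would identify the generator. Denoting by $\mathscr{A}'$ the generator of $\mathscr{T}$, for ${\rm Re}\,\la$ large the block Laplace transform $R(\la,\mathscr{A}')=\int_0^\infty e^{-\la t}\mathscr{T}(t)\,dt$ has diagonal entries $R(\la,A)$ and $R(\la,Q)$, while its off-diagonal entry, by the convolution theorem and the observation-operator identity $\int_0^\infty e^{-\la s}\L_\Lambda S(s)g\,ds=\L R(\la,Q)g$, equals $R(\la,A_{-1})B\,\L R(\la,Q)g=\D_\la\,L R(\la,Q)g$, using $R(\la,A_{-1})B=\D_\la$. On the other hand, solving $(\la-\mathscr{A})\left(\begin{smallmatrix}x\\g\end{smallmatrix}\right)=\left(\begin{smallmatrix}y\\h\end{smallmatrix}\right)$ directly in $D(\mathscr{A})=\ker\mathscr{G}$ gives $g=R(\la,Q)h$ from $(\la-Q_m)g=h$ with $g(0)=0$, and the decomposition $x=R(\la,A)y+\D_\la Gx$ with the coupling $Gx=Lg$ yields $x=R(\la,A)y+\D_\la L R(\la,Q)h$ — exactly the same block operator. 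Hence $R(\la,\mathscr{A}')=R(\la,\mathscr{A})$, so $\mathscr{A}'=\mathscr{A}$ and $\mathscr{A}$ generates $\mathscr{T}$, as claimed.
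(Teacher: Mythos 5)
Your proposal is correct and follows essentially the same route as the paper's proof: verify directly that $\mathscr{T}$ is a $C_0$-semigroup (with the cocycle identity $R(t+s)=T(t)R(s)+R(t)S(s)$ obtained by splitting the integral and changing variables), then identify the generator by computing the block Laplace transform $R(\la,E)$ and matching it against the solution of $(\la-\mathscr{A})\left(\begin{smallmatrix}x\\g\end{smallmatrix}\right)=\left(\begin{smallmatrix}y\\h\end{smallmatrix}\right)$ via $A_m=(A_{-1}+BG)_{|\mathscr{Z}}$ and $\D_\la=R(\la,A_{-1})B$. The only difference is that you spell out details the paper leaves implicit, notably the factorization $R(t)g=\Phi^{A,B}_t\Psi^{Q,\L}g$ justifying boundedness of $R(t)$ and the convolution argument for the off-diagonal Laplace transform entry.
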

\begin{proof}
Let us prove that the family of operators $(\mathscr{T}(t))_{t\ge 0}$ define a strongly continuous semigroup on $\mathscr{H}$. Clearly $\mathscr{T}(0)=I_{\mathscr{H}}$. On one hand, the strong continuity of this family follows from the fact that $R(t)f\to 0$ as $t\to 0^+$, for any $f\in L^2([-r,0],\mathscr{U})$, and the fact that $T$ and $S$ are $C_0$--semigroups on $H$ and $L^2([-r,0],\mathscr{U})$, respectively. On the other hand, for any $t,s\ge 0,$ we have
\begin{align*}
\mathscr{T}(t)\mathscr{T}(s)=\begin{pmatrix}T(t+s)& T(t)R(s)+R(t)S(s)\\ 0& S(t+s)\end{pmatrix}.
\end{align*}
Moreover, a simple change of variables shows that $T(t)R(s)+R(t)S(s)=R(t+s)$. Thus $\mathscr{T}(t)\mathscr{T}(s)=\mathscr{T}(t+s)$. Let $E:D(E)\subset \mathscr{H}\to \mathscr{H}$ be the generator of $\mathscr{T}$ and prove that $E=\mathscr{A}$. Let $\la>0$ be sufficiently large. By taking Laplace Transform in both sides of \eqref{mathscrT}, we obtain
\begin{align}\label{resolventE}
R(\la,E)=\begin{pmatrix}R(\la,A)&\D_\la \L R(\la,Q)\\ 0& R(\la,Q)\end{pmatrix}.
\end{align}
Let now $(\begin{smallmatrix}x\\ f\end{smallmatrix})\in D(\mathscr{A})$ and $(\begin{smallmatrix}x_0\\ f_0\end{smallmatrix})\in \mathscr{H}$ such that
\begin{align*}
(\la-\mathscr{A})(\begin{smallmatrix}x\\ f\end{smallmatrix})=(\begin{smallmatrix}x_0\\ f_0\end{smallmatrix}).
\end{align*}
In particular, we have $(\la-A_m)x=x_0$ and $f=R(\la,Q)f_0$. As $x\in \mathscr{Z},$ then by \eqref{Am=A+BG}, we have $(\la-A_{-1})x-BGx=x_0$. But $Gx=Lf=LR(\la,Q)f_0$ and $\D_\la=R(\la,A_{-1})B$. Then $x=R(\la,A)x_0+\D_\la LR(\la,Q)f_0$. This implies that $\la-\mathscr{A}$ is invertible and $(\la-\mathscr{A})^{-1}=R(\la,E),$ due to \eqref{resolventE}. Hence $\mathscr{A}=E$. This ends the proof.
\end{proof}
The following result is needed in the proof of the main result of this work.
\begin{proposition}\label{P-admis}
  Assume that the triple $(A,B,C)$ is regular. The following operator
  \begin{align*}
  \mathscr{P}:=\mathscr{N}_{|D(\mathscr{A})}\in\calL(D(\mathscr{A}),\mathscr{U})
  \end{align*}
  is an admissible observation operator for $\mathscr{A}$. Furthermore, if $\mathscr{P}_\Lambda,\,C_\Lambda$ and $\L_\Lambda$ denote Yosida extension of $\mathscr{P},C$ and $\L$, for $\mathscr{A},$ $A$ and $Q,$ respectively, then
  \begin{align}\label{walid}
  D(C_\Lambda)\times D(\L_\Lambda)\subset D(\mathscr{P}_\Lambda)\quad\text{and}\quad \left(\mathscr{P}\right)_{|D(C_\Lambda)\times D(\L_\Lambda)}=(C_\Lambda\;\;0).
  \end{align}
\end{proposition}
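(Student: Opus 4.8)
The plan is to verify the admissibility estimate \eqref{Estimate-AC} for the pair $(\mathscr{P},\mathscr{A})$ and then to compute the Yosida extension $\mathscr{P}_\Lambda$ on the product domain. First I would fix $(\begin{smallmatrix}x\\ g\end{smallmatrix})\in D(\mathscr{A})$; since the semigroup preserves its generator's domain, $\mathscr{T}(t)(\begin{smallmatrix}x\\ g\end{smallmatrix})\in D(\mathscr{A})$ for every $t$, and the explicit form \eqref{mathscrT} gives $\mathscr{P}\mathscr{T}(t)(\begin{smallmatrix}x\\ g\end{smallmatrix})=\mathscr{C}\bigl(T(t)x+R(t)g\bigr)$. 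The first component lies in $\mathscr{Z}=D(A_m)$, so by the regularity identity \eqref{Hadd-hadd} one may replace $\mathscr{C}$ by $C_\Lambda$; and since $T(t)x\in D(C_\Lambda)$ for a.e. $t$ (admissibility of $(C,A)$, itself a consequence of well-posedness of the regular triple) while $R(t)g\in D(C_\Lambda)$ for a.e. $t$ (justified next), the linearity of $C_\Lambda$ yields the splitting $\mathscr{P}\mathscr{T}(t)(\begin{smallmatrix}x\\ g\end{smallmatrix})=C_\Lambda T(t)x+C_\Lambda R(t)g$.

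I would then estimate the two integrals separately. For the first, admissibility of $(C,A)$ gives $\int_0^\al\|C_\Lambda T(t)x\|^2\,dt\le\ga_1^2\|x\|^2$. For the second, the key observation is that, setting $u(s):=\L_\Lambda S(s)g$, the off-diagonal block is precisely the control map $R(t)g=\Phi^{A,B}_t u$; regularity of $(A,B,C)$ then gives both $\Phi^{A,B}_t u\in D(C_\Lambda)$ and $C_\Lambda R(t)g=({\bf F}^{A,B,C}u)(t)$. Combining the $L^2$-boundedness of ${\bf F}^{A,B,C}$ (inequality \eqref{ineq-F}, extended by density) with the admissibility of $(\L,Q)$ from Example \ref{shift-example}, which gives $\|u\|_{L^2([0,\al],\mathscr{U})}=\|\Psi^{Q,\L}g\|_{L^2}\le\ga_2\|g\|_2$, yields $\int_0^\al\|C_\Lambda R(t)g\|^2\,dt\le\ka^2\ga_2^2\|g\|_2^2$. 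Adding the two bounds and using that $\|x\|^2+\|g\|_2^2$ is comparable to $\|(\begin{smallmatrix}x\\ g\end{smallmatrix})\|^2$ establishes the admissibility of $(\mathscr{P},\mathscr{A})$.

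For the identification \eqref{walid} I would start from the resolvent formula \eqref{resolventE}, which gives, for $(\begin{smallmatrix}x\\ g\end{smallmatrix})\in\mathscr{H}$ and large $\la$, $\mathscr{P}\,\la R(\la,\mathscr{A})(\begin{smallmatrix}x\\ g\end{smallmatrix})=C\,\la R(\la,A)x+C_\Lambda\D_\la\bigl(\la\L R(\la,Q)g\bigr)$, where the second block again uses $\ker(\la-A_m)\subset\mathscr{Z}$ and \eqref{Hadd-hadd}. If $x\in D(C_\Lambda)$, the first term tends to $C_\Lambda x$ by the definition of the Yosida extension. If $g\in D(\L_\Lambda)$, then $\la\L R(\la,Q)g\to\L_\Lambda g$, while $C_\Lambda\D_\la$ is the transfer function of $(A,B,C)$, which is uniformly bounded for large real $\la$ and converges strongly to the feedthrough operator, equal to $0$ by Definition \ref{regular-triple}; splitting $C_\Lambda\D_\la(\la\L R(\la,Q)g)$ accordingly shows that this term tends to $0$. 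Hence $D(C_\Lambda)\times D(\L_\Lambda)\subset D(\mathscr{P}_\Lambda)$ and $\mathscr{P}_\Lambda=(C_\Lambda\;\;0)$ on that set.

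I expect the main obstacle to be the joint justification of replacing $\mathscr{C}$ by $C_\Lambda$ and splitting $C_\Lambda$ across the sum $T(t)x+R(t)g$: this hinges on showing that \emph{each} summand lies in $D(C_\Lambda)$, which is exactly where admissibility of $(C,A)$ (for $T(t)x$) and regularity of $(A,B,C)$ (for $\Phi^{A,B}_t u$) enter, rather than merely the sum lying in $\mathscr{Z}$. The second delicate point is the vanishing of the $\D_\la$-term in \eqref{walid}, which relies on the feedthrough of the regular triple being zero together with the uniform boundedness of $C_\Lambda\D_\la$ on a real half-line.
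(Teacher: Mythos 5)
Your proposal is correct and follows essentially the same route as the paper: the same resolvent computation from \eqref{resolventE}, the same identification of the off-diagonal block $R(t)g=\Phi^{A,B}_t\L_\Lambda S(\cdot)g$, and the same estimates via the boundedness of ${\bf F}^{A,B,C}$ and the admissibility of $(\L,Q)$, the only structural difference being that you prove admissibility first (justifying the splitting $\mathscr{P}\mathscr{T}(t)(\begin{smallmatrix}x\\ g\end{smallmatrix})=C_\Lambda T(t)x+C_\Lambda R(t)g$ directly through \eqref{Hadd-hadd} and linearity) whereas the paper establishes \eqref{walid} first and routes the admissibility estimate through $\mathscr{P}_\Lambda$. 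One point in your favor: your treatment of the vanishing term in \eqref{walid} --- splitting $C_\Lambda\D_\la(\la\L R(\la,Q)g)$ into a part controlled by uniform boundedness of the transfer function times $\|\la\L R(\la,Q)g-\L_\Lambda g\|\to 0$, plus $C_\Lambda\D_\la$ applied to the fixed vector $\L_\Lambda g$, which tends to $0$ by strong convergence to the (zero) feedthrough --- is more careful than the paper's factored bound $\|C\D_\la\|\left(\|\L\la R(\la,Q)g-\L_\Lambda g\|+\|\L_\Lambda g\|\right)$, which as written would require norm convergence $\|C\D_\la\|\to 0$, a property regularity alone does not guarantee.
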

\begin{proof}
First, we give some information about the Yosida extension of $\mathscr{P}$ for $\mathscr{A}$. Let $\la>0$ be sufficiently large and let $(\begin{smallmatrix} x\\ g\end{smallmatrix})\in D(C_\Lambda)\times D(\L_\Lambda)$. According to \eqref{resolventE},  we have
\begin{align*}
\mathscr{P}\la R(\la,\mathscr{A})(\begin{smallmatrix}x\\g\end{smallmatrix})=C\la R(\la,A)+C\D_\la \L\la R(\la,Q)g,
\end{align*}
 \begin{align*}
\|C\D_\la \L\la R(\la,Q)g\| \le \|C\D_\la\| \left(\|\L\la R(\la,Q)g-\L_\Lambda g\|+\|\L_\Lambda g\|\right).
\end{align*}
As the transfer function of the regular system $(A,B,C)$ satisfies $C\D_\la\to 0$ as $\la\to+\infty$, the property \eqref{walid} follows. Second, we show that $\mathscr{P}$ is an admissible observation operator for $\mathscr{A}$. In fact, let $(\begin{smallmatrix} x\\ g\end{smallmatrix})\in D(\mathscr{A})$ and $\al>0$. Then
\begin{align*}
\int^\al_0 \|\mathscr{P}\mathscr{T}(t)(\begin{smallmatrix} x\\ g\end{smallmatrix})\|^2dt=\int^\al_0 \|\mathscr{P}_\Lambda\mathscr{T}(t)(\begin{smallmatrix} x\\ g\end{smallmatrix})\|^2dt
\end{align*}
As $(A,B,C)$ is regular and $\L$ is an admissible observation operator for $Q,$ then
\begin{align*}
& T(t)x\in D(C_\Lambda),\quad R(t)g= \Phi^{A,B}_t \L_\Lambda S(\cdot)g \in D(C_\Lambda),\quad S(t)g\in D(\L_\Lambda),
\end{align*}
for a.e. $t\ge 0$. Thus $\mathscr{T}(t)(\begin{smallmatrix} x\\ g\end{smallmatrix})\in D(C_\Lambda)\times D(\L_\Lambda)$ for a.e. $t\ge 0$, due to \eqref{mathscrT}. Now according to \eqref{walid}, we have
\begin{align*}
\int^\al_0 \|\mathscr{P}\mathscr{T}(t)(\begin{smallmatrix} x\\ g\end{smallmatrix})\|^2dt&=\int^\al_0 \|C_\Lambda T(t) x+ C_\Lambda \Phi_t^{A,B} \L_\Lambda S(\cdot)g\|^2dt\cr & \le 2 \ga \|x\|^2+2\kappa \|\L_\Lambda S(\cdot)g\|^2_{L^2([0,\al],\mathscr{U})}\cr & \le 2(\ga+\kappa c)\left(\|x\|^2+\|g\|^2_{L^2([-r,0],\mathscr{U})}\right)\cr & \le \tilde{c} \left\|(\begin{smallmatrix} x\\ g\end{smallmatrix})\right\|^2,
\end{align*}
where $\ga,\kappa,c$ and $\tilde{c}$ are constant, some of them depend on $\al$.
 \end{proof}

The following is the main result of this section.

\begin{theorem}\label{Main-1}
Assume that $B$ is an admissible control operator for $A$. Then there exists an admissible control operator $\mathscr{B}\in\calL(\mathscr{U},\mathscr{H}_{-1})$ for $\mathscr{A}$ such that the system \eqref{Trans-1} can be rewritten as
\begin{align}\label{Trans-2}
\begin{cases} dZ(t)=\left(\mathscr{A} Z(t)+\mathscr{B}U(t)\right)dt+\mathscr{M}(Z(t))dW(t),\quad Z(0)=(\begin{smallmatrix}\xi\\\varphi\end{smallmatrix}),& t>0,\cr Y(t)=\mathscr{P}Z(t),& t>0.
\end{cases}
\end{align}
  Moreover this system has a unique mild solution $Z\in \calC_\F(0,+\infty;L^2(\Om,\mathscr{H}))$ satisfying
  \begin{align}\label{mild-Z}
  Z^{\xi,\varphi,U}(t)=\mathscr{T}(t)(\begin{smallmatrix}\xi\\\varphi\end{smallmatrix})+\int^t_0\mathscr{T}(t-s)\mathscr{M}(Z^{\xi,\varphi,0}(s))dW(s)+\Phi^{W}_t U
  \end{align}
  for any $U\in L^2_{\F}(0,+\infty;\mathscr{U})$, where $\Phi^{W}_t\in \calL(L^2_{\F}(0,+\infty;\mathscr{U}),L^2_{\mathscr{F}_t}(\Om,\mathscr{H}))$ for any $t\ge 0$ and
  \begin{align}\label{Phi-W}
  \Phi^{W}_t U=\Phi^{\mathscr{A},\mathscr{B}}_t U+\int^t_0 \mathscr{T}(t-s)\mathscr{M}(\Phi^{W}_s U)dW(s),
  \end{align}
  where $(\Phi^{\mathscr{A},\mathscr{B}}_t)_{t\ge 0}$ is the family of control maps associated with the pair $(\mathscr{A},\mathscr{B})$. Furthermore, if the triple $(A,B,C)$ is regular and if $X^{\xi,\varphi,U}(t)$ is the first component of $Z^{\xi,\varphi,U}(t)$, then
$X^{\xi,\varphi,U}(t)\in D(C_\Lambda)$ for a.e. $t>0$ and $\P$-a.s. and
\begin{align*}
\|C_\Lambda X(\cdot)\|_{L^2_{\F}(0,\al;\mathscr{Y})}\le c(\al)\left( \|\xi\|_{L^2_{\mathscr{F}_0}(\Om,\mathscr{H})}+\|U\|_{L^2_{\F}(0,\al;\mathscr{U})}\right)
\end{align*}
for any $\al>0,$ $\xi\in L^2_{\mathscr{F}_0}(\Om,H),$ $\varphi\in L^2_{\mathscr{F}_0}(\Om,L^2([-r,0],\mathscr{U})),$ $U\in L^2_{\F}(0,\al;\mathscr{U})$ and some constant $c(\al)>0$.
\end{theorem}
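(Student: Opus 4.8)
The statement combines three tasks: (i) produce an admissible control operator $\mathscr{B}$ for $\mathscr{A}$ making \eqref{Trans-1} equivalent to \eqref{Trans-2}; (ii) establish existence and uniqueness of the mild solution with the fixed-point representation \eqref{Phi-W}; and (iii) under regularity of $(A,B,C)$, produce the output estimate via $C_\Lambda$. The key structural fact, already assembled in the excerpt, is that $\mathscr{G}$ is surjective (Lemma \ref{L1}) and $\mathscr{A}_m$ is closed (Lemma \ref{L2}) with $\mathscr{A}=\ker\mathscr{G}$ generating $\mathscr{T}$ (Proposition \ref{mathscrA}). So the construction of $\mathscr{B}$ should mirror the boundary-to-distributed machinery of Section \ref{sec:2} verbatim.

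\textbf{Step 1 (construction of $\mathscr{B}$).} The plan is to define the Dirichlet operator $\mathbb{D}^{\mathscr{A}}_\la:=(\mathscr{G}_{|\ker(\la-\mathscr{A}_m)})^{-1}$ and set $\mathscr{B}:=(\la-\mathscr{A}_{-1})\mathbb{D}^{\mathscr{A}}_\la\in\calL(\mathscr{U},\mathscr{H}_{-1})$, exactly as in \eqref{B-hadd-rhandi}. From the proof of Lemma \ref{L1} one reads off that for $v\in\mathscr{U}$ the element $\mathbb{D}^{\mathscr{A}}_\la v=\left(\begin{smallmatrix}\D_\la L e_\la v\\ e_\la v\end{smallmatrix}\right)$ lies in $\ker(\la-\mathscr{A}_m)$ and is mapped by $\mathscr{G}$ to $v$. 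Then \eqref{sum} applied to $\mathscr{A}_m,\mathscr{G}$ gives $\mathscr{A}_m=(\mathscr{A}_{-1}+\mathscr{B}\mathscr{G})_{|D(\mathscr{A}_m)}$, which is precisely the identity converting \eqref{Trans-1} into \eqref{Trans-2}. Admissibility of $(\mathscr{A},\mathscr{B})$ should follow because the second component reduces to the admissible pair $(Q,\beta)$ of Example \ref{shift-example}, while the first component couples through $R(t)$ built from the admissible $(A,B)$; concretely one expects $\Phi^{\mathscr{A},\mathscr{B}}_t v=\left(\begin{smallmatrix}\Phi^{A,B}_t(\L_\Lambda\Phi^{Q,\beta}_{\cdot}v)+\text{(shift terms)}\\ \Phi^{Q,\beta}_t v\end{smallmatrix}\right)$, and boundedness into $\mathscr{H}$ follows from admissibility of each ingredient.

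\textbf{Step 2 (well-posedness of the stochastic equation).} With $\mathscr{B}$ admissible, the plan is to invoke the abstract well-posedness theory of \cite{Lahbiri-Hadd} for distributed stochastic systems with unbounded control $\mathscr{B}$ and noise $\mathscr{M}$. Since $\mathscr{M}$ is bounded on $\mathscr{H}$, the stochastic convolution $\int_0^t \mathscr{T}(t-s)\mathscr{M}(\cdot)dW(s)$ defines a contraction on $\calC_\F(0,\t;L^2(\Om,\mathscr{H}))$ for small $\t$, so a Banach fixed-point argument yields a unique mild solution; the representation \eqref{mild-Z} and the self-referential control map \eqref{Phi-W} arise by splitting $Z^{\xi,\varphi,U}$ into the deterministic-boundary part, the noise part, and the control part $\Phi^W_t U$, then reading off that $\Phi^W_t U$ solves its own fixed-point equation \eqref{Phi-W}. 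Membership $\Phi^W_t U\in L^2_{\mathscr{F}_t}(\Om,\mathscr{H})$ combines admissibility of $\mathscr{B}$ (for the first summand) with the Itô isometry and Gronwall (for the stochastic summand).

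\textbf{Step 3 (output estimate) and the main obstacle.} Under regularity of $(A,B,C)$, Proposition \ref{P-admis} gives that $\mathscr{P}$ is admissible for $\mathscr{A}$ with $\mathscr{P}_\Lambda=(C_\Lambda\;\;0)$ on $D(C_\Lambda)\times D(\L_\Lambda)$. The plan is to show $X^{\xi,\varphi,U}(t)\in D(C_\Lambda)$ by checking that each of the three summands of $X$ lands there: $T(t)\xi$ and the deterministic control part by regularity of $(A,B,C)$, and the stochastic convolution by an admissibility-plus-Itô argument showing $\int_0^t T_{-1}(t-s)\cdots\in D(C_\Lambda)$ a.e. Then the estimate follows by applying admissibility of $\mathscr{P}$ to each piece and using the Itô isometry to bound the stochastic term by $\|\mathscr{M}\|$ times the solution norm, closing with Gronwall. \emph{The main obstacle} is Step 3's stochastic term: unlike the deterministic case one cannot directly apply the Yosida-extension representation \eqref{relation-Psi} to $\int_0^t\mathscr{T}(t-s)\mathscr{M}(Z(s))dW(s)$, since $C_\Lambda$ is only densely defined and the convolution is an $L^2(\Om)$-limit; one must justify interchanging $C_\Lambda$ with the stochastic integral, which is exactly the delicate point handled abstractly in \cite{Lahbiri-Hadd} via admissibility of $\mathscr{P}$ and the a.e.-in-$t$, $\P$-a.s. regularity of the trajectory established in Proposition \ref{P-admis}.
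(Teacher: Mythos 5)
Your Steps 1 and 2 follow the paper's proof: the same Dirichlet-operator construction $\mathscr{B}=(\la-\mathscr{A}_{-1})\mathscr{D}_\la$ with the same explicit formula $\mathscr{D}_\la v=\left(\begin{smallmatrix}\D_\la Le_\la v\\ e_\la v\end{smallmatrix}\right)$, admissibility of $\mathscr{B}$ inherited from that of $B$ and $\beta$, and existence/uniqueness of the mild solution quoted from \cite[Theorem 2.7]{Lahbiri-Hadd}. (One small point: your hedge ``$+$ (shift terms)'' in the first component of $\Phi^{\mathscr{A},\mathscr{B}}_t$ is unnecessary; using \eqref{new-dirichlet} and injectivity of the Laplace transform the paper shows the first component is exactly $\Phi^{A,B}_t{\bf F}^L u$, where $({\bf F}^L u)(s)=\L_\Lambda\Phi^{Q,\beta}_s u$.)

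The genuine gap is in Step 3. The final estimate has $\|U\|_{L^2_{\F}(0,\al;\mathscr{U})}$ on the right-hand side, so you must control the \emph{control-driven} part of the output, $\mathscr{P}_\Lambda\Phi^{\mathscr{A},\mathscr{B}}_t U$, in $L^2$. Observation admissibility of $\mathscr{P}$ (Proposition \ref{P-admis}) only bounds $\mathscr{P}_\Lambda\mathscr{T}(\cdot)z$ along semigroup orbits; by Proposition \ref{abc-well-posed}, the bound on the input-output map is an \emph{independent} third condition, not a consequence of admissibility of $\mathscr{B}$ and of $\mathscr{P}$ separately. So your plan ``apply admissibility of $\mathscr{P}$ to each piece, then It\^o isometry and Gronwall'' already fails on the deterministic control term, before any stochastic subtlety arises. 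What the paper actually does — and what is needed to invoke \cite[Theorem 2.15]{Lahbiri-Hadd}, which requires a \emph{regular} triple, not merely an admissible observation operator — is to prove that $(\mathscr{A},\mathscr{B},\mathscr{P})$ is regular: first the factorization $\left({\bf F}^{\mathscr{A},\mathscr{B},\mathscr{P}}u\right)(t)=\mathscr{P}_\Lambda\Phi^{\mathscr{A},\mathscr{B}}_t u=\left({\bf F}^{A,B,C}{\bf F}^L u\right)(t)$, a composition of two bounded maps thanks to regularity of $(A,B,C)$, regularity of $(Q,\beta,\L)$ and the identification \eqref{walid} applied to \eqref{mathscrAB}; second the range condition ${\rm Range}(\mathscr{D}_\la)\subset D(\mathscr{P}_\Lambda)$, which follows from \eqref{new-dirichlet} together with ${\rm Range}(\D_\la)\subset D(C_\Lambda)$, ${\rm Range}(e_\la)\subset D(\L_\Lambda)$ and \eqref{walid}. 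Once this is added, the rest of your outline closes exactly as in the paper: \cite[Theorem 2.15]{Lahbiri-Hadd} at the product level gives $Z(t)\in D(\mathscr{P}_\Lambda)$ a.e.\ $\P$-a.s.\ and the estimate \eqref{Z-estim-1}; the componentwise form of the solution gives $Z(t)\in D(C_\Lambda)\times D(\L_\Lambda)$; and \eqref{walid} converts the $\mathscr{P}_\Lambda$-estimate into the stated $C_\Lambda$-estimate. In short, the missing idea is precisely the factorization ${\bf F}^{\mathscr{A},\mathscr{B},\mathscr{P}}={\bf F}^{A,B,C}\circ{\bf F}^L$, which is the heart of the delay-to-free-delay reduction.
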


\begin{proof}
Using the technique of Section \ref{sec:2}, we will reformulate the stochastic boundary problem \eqref{Trans-1} to a distributed linear input-output stochastic system. In fact, let $\mathscr{D}_\la,\;\la\in\rho(\mathscr{A})=\rho(A),$ be the Dirichlet operator associated with $\mathscr{A}_m$ and $\mathscr{G}$, and select
\begin{align}\label{mathscrB}
  \mathscr{B}:=(\la-\mathscr{A}_{-1})\mathscr{D}_\la,\quad \la\in\rho(\mathscr{A}).
\end{align}
This implies $\mathscr{A}_m=\mathscr{A}_{-1}+\mathscr{B}\mathscr{G}$ on $D(\mathscr{A}_m)$ (see Section \ref{sec:2}). Thus the system \eqref{Trans-1} is transformed into the system \eqref{Trans-2}. Let us prove that $\mathscr{B}$ is an admissible control operator for $\mathscr{A}$. To this end, we first compute the expression of $\mathscr{D}_\la$ for $\la\in\rho(\mathscr{A})$. Let $(\begin{smallmatrix}x\\g\end{smallmatrix})\in \ker(\la-\mathscr{A}_m)$.
This means that $x\in\mathscr{Z}$ and $g\in W^{1,2}([-r,0],\mathscr{U})$ are such that $Gx=Lg$ $(\la-\mathscr{A}_m)x=0$ and $\la g-g'=0$. This proves that $x=\mathscr{D}_\la v$ with $v\in\mathscr{U}$ and $g=e_\la g(0)$. We also have, $v=G\mathscr{D}_\la v=Le_\la g(0)$. We then deduce that
\begin{align}\label{new-dirichlet}
\mathscr{D}_\la v=\begin{pmatrix} \D_\la Le_\la v\\ e_\la v\end{pmatrix},\quad \la\in\rho(A).
\end{align}
Let $(Q,\beta,\L)$ be the regular system studied in Example \ref{shift-example} with control maps $(\Phi^{Q,\beta}_t)_{t\ge 0}$ and extended input operator ${\bf F}^{Q,\beta,\L}:={\bf F}^{L}$. Using \eqref{BC}, \eqref{mathscrB}, \eqref{new-dirichlet} and the injectivity of the Laplace transform, we have
\begin{align}\label{mathscrAB}
\Phi^{\mathscr{A},\mathscr{B}}_t u:=\int^t_0 \mathscr{T}_{-1}(t-s)\mathscr{B}u(s)ds=\begin{pmatrix} \Phi^{A,B}_t{\bf F}^Lu\\ \Phi^{Q,\beta}_tu\end{pmatrix}
\end{align}
for any $t\ge 0$ and $u\in L^2([0,+\infty),\mathscr{U})$. The admissibility of $\mathscr{B}$ for $\mathscr{A}$ follows immediately from those of $B$ and $\beta$ for $A$ and $Q,$ respectively. Now according to \cite[Theorem 2.7]{Lahbiri-Hadd} (see also \cite{Lu2015}), the system \eqref{Trans-2} has a unique mild solution $Z\in \calC_{\F}(0,+\infty,L^2(\Om,\mathscr{H}))$ satisfying \eqref{Phi-W}. According to Proposition \ref{P-admis}, the operator $\mathscr{P}$ is an admissible observation operator for $\mathscr{A}$. Next we show that the triple $(\mathscr{A},\mathscr{B},\mathscr{P})$ is regular. In fact,  the regularity of the triple $(A,B,C)$ is regular implies that $\Phi^{A,B}_t u\in D(C_\Lambda)$ for a.e. $t\ge 0$ and $u\in L^2_{loc}(\R^+,\mathscr{U})$. Similarly for the system associated with the left shift semigroup, $\Phi^{Q,\beta}u\in D(\L_{\Lambda})$  for a.e. $t\ge 0$ and $u\in L^2_{loc}(\R^+,\mathscr{U})$. Now according to \eqref{walid} and \eqref{mathscrAB}, we have $\Phi^{\mathscr{A},\mathscr{B}}_t u\in D(\mathscr{P}_\Lambda)$ for a.e. $t\ge 0$ and $u\in L^2_{loc}(\R^+,\mathscr{U})$ and the operator
\begin{align*}
\left({\bf F}^{\mathscr{A},\mathscr{B},\mathscr{P}} u\right)(t)=\mathscr{P}_\Lambda \Phi^{\mathscr{A},\mathscr{B}}_t u= \left({\bf F}^{A,B,C}{\bf F}^Lu\right)(t),
\end{align*}
is linear bounded from $L^2([0,\al],\mathscr{U})$ to $L^2([0,\al],\mathscr{U})$ for any $\al>0$. This shows that the triple $(\mathscr{A},\mathscr{B},\mathscr{P})$ is well-posed. Again by regularity of triples $(A,B,C)$ and $(Q,\beta,\L)$, we have  ${\rm Range}(\D_\la)\subset D(C_\La)$ and ${\rm Range}(e_\la)\subset D(\L_\La)$ for some (hence all) $\la\in \rho(A)$. This shows that ${\rm Range}(\mathscr{D}_\la)\subset D(\mathscr{P}_\La)$ for $\la\in\rho(\mathscr{A})$, due to \eqref{walid}. Thus $(\mathscr{A},\mathscr{B},\mathscr{P})$ is regular. Now, by \cite[Theorem 2.15]{Lahbiri-Hadd}, we have $Z(t):=Z^{\xi,\varphi,U}(t)\in D(\mathscr{P}_\Lambda)$ for a.e. $t>0$ and $\P$-a.s., and
\begin{align}\label{Z-estim-1}
\|\mathscr{P}_{\Lambda}Z^{\xi,\varphi,U}(\cdot)\|_{L^2_{\F}(0,\al;\mathscr{U})}\le c(\al)\left( \|(\begin{smallmatrix}\xi\\\varphi\end{smallmatrix})\|_{L^2_{\mathscr{F}_0}(\Om,\mathscr{H})}+\|U\|_{L^2_{\F}(0,\al;\mathscr{U})}\right)
\end{align}
for any $\al>0,$ $\xi\in L^2_{\mathscr{F}_0}(\Om,H),$ $\varphi\in L^2_{\mathscr{F}_0}(\Om,L^2([-r,0],\mathscr{U})),$ $U\in L^2_{\F}(0,\al;\mathscr{U})$ and some constant $c(\al)>0$.  We denote by
\begin{align*}
Z^{\xi,\varphi,U}(t)=\begin{pmatrix}X^{\xi,\varphi,U}(t)\\ V(t,\cdot)\end{pmatrix},\qquad t\ge 0.
\end{align*}
Then, by using Proposition \ref{mathscrA}, the equations \eqref{mild-Z}, \eqref{Phi-W} and the relation \eqref{mathscrAB}, we have
\begin{align*}
&X^{\xi,\varphi,U}(t)=T(t)\xi+\int^t_0 T_{-1}(t-s)B\L_\Lambda U_sds+\int^t_0 T(t-s)M(X^{\xi,\varphi,U}(s))dW(s),\cr
& V(t,\cdot)=U_t
\end{align*}
for a.e. $t\ge 0$ and $\P$-a.s. Again by \cite{Lahbiri-Hadd}, we have $X^{\xi,\varphi,U}(t)\in D(C_\Lambda)$ and $V(t,\cdot)\in D(\L_\Lambda)$ for a.e. $t>0$ and $\P$-a.s. This implies that $Z^{\xi,\varphi,U}(t)\in D(C_\Lambda)\times D(\L_\Lambda)$ for a.e. $t>0$ and $\P$-a.s.  Thus the results follow by \eqref{walid} and \eqref{Z-estim-1}.
\end{proof}
%\begin{example}
% Let $\mathscr{O}\subset \mathbb{R}^n$ be an open bounded set and put $X=L^2(\Omega)$ with boundary $\partial\mathscr{O}$. We consider the following nonlinear initial value problem
%\begin{eqnarray}\label{exx}
%	\begin{cases}
%		\dot{z}(t,x)=\Delta z(t,x),\quad z(0,x)=f(x)  & x\in \mathscr{O},t>0,\\
%		 y(t,x)=\displaystyle\int_{\mathscr{O}}K(x,y)(-\Delta)^{\beta}z(t,y)dy, & x\in \partial \mathscr{O}, t\ge 0,
%	\end{cases}
%\end{eqnarray}
%for $f\in L^2(\mathscr{O})$, $\beta\in (0,\frac{1}{2})$ and  $K\in L^\infty( \partial \mathscr{O}\times \mathscr{O})$.
%\end{example}
\begin{example}\label{heat-sans-delay}
Consider the following input-output partial differential stochastic system
\begin{align}\label{heat-eq}
\begin{cases}
\partial_t X(t,x)=\Delta X(t,x)+\displaystyle\int_{\mathscr{O}}k(x,y)X(t,y)dy \partial_t W(t),& x\in\mathscr{O},\quad t>0,\cr X(0,x)=\xi(x),& x\in\mathscr{O},\cr  \nabla X(t,x)|\nu(x)=U(t-r,x),& x\in \partial\mathscr{O},\quad t>0,\cr U(t,x)=\varphi(t,x),& t\in [-r,0],\; x\in \mathscr{O},\cr Y(t,x)=c(x)X(t,x),& x\in \partial\mathscr{O},\quad t\ge 0.
\end{cases}
\end{align}
Here, $\mathscr{O}$ is a bounded open subset of $\R^n$ with a $C^2$-boundary $\partial\mathscr{O}$ and outer unit normal $\nu(x)$, the initial condition $\xi\in L^2(\Om\times \mathscr{O}),$ $r>0$ is a real number, $k\in L^2(\mathscr{O}\times \mathscr{O}),$ and $c(\cdot)\in C_b(\partial\mathscr{O})$. We set
\begin{align*}
H=L^2(\mathscr{O}),\quad \mathscr{Z}=H^2(\mathscr{O}),\quad \mathscr{U}=\mathscr{Y}=L^2(\partial\mathscr{O}).
\end{align*}
For any $f\in H,$ we define
\begin{align*}
\left[M(f)\right](x)=\int_{\mathscr{O}}k(x,y)f(y)dy,\qquad x\in  \mathscr{O}.
\end{align*}
Using H\"older's inequality, we get $\|M(f)\|_H \le \|k\|_{L^2(\mathscr{O}\times \mathscr{O})} \|f\|_H$. We also define the delay operator
\begin{align*}
Lg=g(-r),\qquad g\in W^{1,2}([-r,0],\mathscr{U}).
\end{align*}
Let us now select the following operators
\begin{align*}
& A_m:=\Delta, \quad D(A_m):=\mathscr{Z}, \cr  \quad & G: \mathscr{Z}\to \mathscr{U},\quad  (G \psi)(x)=\nabla \psi(x)|\nu(x),\cr & \mathscr{C}:\mathscr{Z}\to \mathscr{Y},\quad (\mathscr{C}\psi)(x)=c(x)\psi(x).
\end{align*}
We know that the operator
\begin{align*}
A:=A_m,\qquad  D(A):=\{f\in \mathscr{Z}: (G \psi)(x)=\nabla \psi(x)|\nu(x)=0,\;x\in\partial\mathscr{O}\}
\end{align*}
generates an analytic semigroup $T:=(T(t))_{t\ge 0}$ on $H$. In what follows, we collect from \cite[Chap.3]{LT-00} some facts about fractional operators and their relation to Neumann boundary conditions.  We denote by $\phi=\D_1\psi\in W^{3/2,2}(\mathscr{O})$ the solution of the elliptic boundary-value problem $\Delta \phi=0$ on $\mathscr{O}$ and $\nabla\phi|\nu=\psi$ on $\partial\mathscr{O}$, where $\psi\in L^2(\partial\mathscr{O})$. Then $\D_1$ maps $L^2(\partial\mathscr{O})$ continuously into $D((I-A)^\beta)$ for $\beta\in (0,\frac{3}{4})$. On the other hand, by using the analyticity of the semigroup $T,$ we obtain
\begin{align}\label{yaya}
\|(I-A)^\al T(t-s)(I-A_{-1})\D_1 v\|_H \le c_\beta (t-s)^{\beta-\al-1}\|v\|_{L^2(\partial\mathscr{O})}
\end{align}
for any $v\in L^2(\partial\mathscr{O}),\;\al\ge 0$ and $\beta\in (0,\frac{3}{4}),$ where $c_\beta>0$ is a constant. In particular for $\al=0$ and $\beta\in (\frac{1}{2},\frac{3}{4})$ the control maps $\Phi^{A,B}_t$ associated to the control operator $B:=(I-A_{-1})\D_1$ satisfy  $\Phi^{A,B}_t: L^2([0,+\infty),\mathscr{U})\to H$  and $\|\Phi^{A,B}_t\|\le \delta_\beta t^{2\beta-1}$ for any $t\ge 0$ and a constant $\delta_\beta>0$. This shows that $(A,B)$ is well-posed. On the other hand, the observation operator $\mathscr{C} \varphi= c(\cdot){\rm tr}\varphi$, where ${\rm tr}$ is the trace operator, is uniformly bounded from $D((I-A)^\al)$ to $L^2(\partial\mathscr{O})$ for $\al>\frac{1}{4}$. Now if we set $C:=\mathscr{C}$ with domain $D(C)=D(A)$, then for any $\varphi\in D(A)$, $\|CT(t)\varphi\|\le \ga t^{-\al}$ for a constant $\ga>0$. Thus by choosing $\al\in (\frac{1}{4},\frac{1}{2})$, we deduce that $(C,A)$ is well-posed. To apply theorem \ref{Main-1}, we need to prove that the triple $(A,B,C)$ is regular. In fact, for  $\al\in (\frac{1}{4},\frac{1}{2}),$ $\beta\in \frac{1}{2},\frac{3}{4})$ and $v\in W^{2,2}([0,t],\mathscr{U})$ with $v(0)=0$, we have
\begin{align}\label{F-estim}
\|{\bf F}^{A,B,C}v=\mathscr{C}\Phi^{A,B}_{\cdot}v\|_{L^2([0,t],\mathscr{U})} \le \kappa_\beta t^{\beta-\al} \|v\|_{L^2([0,t],\mathscr{U})}
\end{align}
for a constant $\kappa_\beta>0,$ due to \eqref{yaya}. Now by Proposition \ref{abc-well-posed}, the triple $(A,B,C)$ is well-posed. On the other hand, let $v_0\in \mathscr{U}$. By H\"older's inequality and the estimate \eqref{F-estim}, we obtain
\begin{align*}
\left\| \frac{1}{t} \int^t_0 \left({\bf F}^{A,B,C}\1_{[0,+\infty)}v_0\right)(s)ds\right\|_{\mathscr{Y}}\le \kappa_\beta t^{\beta-\al}\|v_0\|_{\mathscr{U}}\underset{t\to 0^+}{\longrightarrow}0.
\end{align*}
Thus $(A,B,C)$ is regular. By a simple computation one can verify that the operator $L$ is an admissible observation operator for the left shift semigroup on $L^2([-r,0],H)$. Thus the assumption of Theorem \ref{Main-1} are satisfied for the stochastic input delay system \eqref{heat-eq}, so it is a well-posed system.
\end{example}

\section{Application: a stochastic Schr\"odinger
system}
In this section, we are concerned with the well-posedness of a stochastic Schr\"odinger
equation with a partial Dirichlet input delay control and a collocated observation. We first need some notations and definitions in order to describe the system that will be studied.

Let $\mathscr{O}\subset \R^n$ ($n\ge 2$) be an open bounded region with $C^3$-boundary
$\partial\mathscr{O}=\overline{\Gamma}_0\cup \overline{\Gamma}_1$,
 where $\Gamma_0$ and $\Gamma_1$ are disjoint
parts of the boundary relatively open in $\partial\mathscr{O}$ and ${\rm int}(\Gamma_0)\neq \emptyset$. Let $H=H^{-1}(\mathscr{O})$ (the dual space
of the Sobolev space $H^1_0(\mathscr{O})$ with respect to the pivot
space $L^2(\mathscr{O})$). We define a bilinear form $a(\cdot,\cdot)$ on
$H^1_0(\mathscr{O})$ by
\begin{align*}
a(f,g):=\int_{\mathscr{O}}\nabla f(x) \overline{\nabla g(x)}dx,\qquad f,g\in H^1_0(\mathscr{O}).
\end{align*}
To this form, we associate the following linear operator $\A:D(\A):=H^1_0(\mathscr{O})\to H,$
\begin{align*}
\langle \A f,g\rangle=a(f,g),\qquad f,g\in H^1_0(\mathscr{O}),
\end{align*}
see e.g. \cite{Arendt-et-al}, \cite{Ouhabaz}. Clearly $\A$ defines a canonical isomorphism from $D(\A)$ to $H$. Now if we denote by $-\Delta: H^2(\mathscr{O})\cap  H^1_0(\mathscr{O})\to L^2(\mathscr{O})$ the Laplacian operator, then $\A$ coincides with $-\Delta$ on $H^2(\mathscr{O})\cap  H^1_0(\mathscr{O})$. This implies that $\A^{-1}f=(-\Delta)^{-1}f$ for any $f\in L^2(\mathscr{O})$. We also recall (see e.g. \cite{Komornik}) that $D(\A^{\frac{1}{2}})=L^2(\mathscr{O})$ and $\A^{\frac{1}{2}}$  is a canonical isomorphism from $L^2(\mathscr{O})$ to $H$. We also have the following continuous embedding:
\begin{align*}
D(\A)\subset D(\A^{\frac{1}{2}})=L^2(\mathscr{O})\hookrightarrow H=H'\hookrightarrow (D(\A^{\frac{1}{2}}))'\subset D(\A)'.
\end{align*}
Now consider the system
\begin{align}\label{chro}
\begin{cases}
dX(t,x)=-i\Delta X(t,x)dt+q(x)X(t,x)dW(t),
& t> 0,\;x\in\mathscr{O},\cr X(0,x)=\xi(x),& x\in \mathscr{O},\cr X(t,x)=0,
& t\ge 0,\; x\in \Gamma_1,\cr X(t,x)=\displaystyle\int^0_{-r}d\mu(\theta,x)U(t+\theta,x), & t\ge 0,\; x\in\Gamma_0,\cr
U(t,x)=\varphi(t,x),& -r\le t\le 0,\; x\in\Gamma_0,\cr
Y(t,x)=i\displaystyle\frac{\partial (\Delta^{-1}X)}{\partial \nu},& t\ge 0,\; x\in \Gamma_0,
\end{cases}
\end{align}
where
$\nu$ is the unit normal of $\partial\mathscr{O}$ pointing towards the exterior of $\mathscr{O}$, $q\in L^\infty(\mathscr{O})$, and for  $\{\mu(\theta,x): \theta\in [-r,0],\;x\in \Gamma_0\}\subset\calL(\R^n)$ and $\mu(\theta,\cdot)\in\calL(L^2(\Gamma_0))$  is a function of bounded variation on $\theta$.
 %for all $f\in L^2(\Gamma_0)$,
%\begin{align*}
%\int_{\Gamma_0}\|\mu(\theta,x)f(x)\|^2dx\le c \|f\|^2_{L^2(\Gamma_0)}
%\end{align*}
%for a constant $c>0$.
Denote $\mathscr{U}:=L^2(\Gamma_0)$. We consider the following closed operator
\begin{align*}
A_m:=i(-\Delta),\quad D(A_m)=\{f\in H^2(\mathscr{O}): f_{|\Gamma_1}=0\}.
\end{align*}
We define the following operators
\begin{align*}
&\mathscr{M}g=q g,\qquad g\in H^{-1}(\mathscr{O}),\cr
& G f:=f_{|\Gamma_0},\qquad f\in D(A_m),\cr & \mathscr{C} f=i\displaystyle\frac{\partial (\Delta^{-1}f)}{\partial \nu},\qquad f\in D(A_m).
\end{align*}
With these notations, the system \eqref{chro} takes the form of our model system \eqref{stoc-inputoutput}. Thus to show the well-posednees of the system \eqref{chro} it suffices to prove that the assumptions of Theorem \ref{Main-1} hold. In fact, we define $A:=(A_m)_{|D(A)}$ with $D(A)=\ker G$. Thus we have
\begin{align*}
A=i\A, \qquad D(A)=\{f\in H^2(\mathscr{O}): f_{|\partial \mathscr{O}}=0\}.
\end{align*}
Let $\D_0$ (we take $\la=0$) be the Dirichlet operator associated with $A_m$ and $G$. It is determined as follow: $\D_0 u=v$ if and only if $\Delta v=0$ in $\mathscr{O},$ $v=0$ on $\Gamma_1$ and $v=u$ on $\Gamma_0$. We have $\D_0\in\calL(\mathscr{U},L^2(\mathscr{O}))$. We select $B=(-A_{-1})\D_0=-i\A_{-1}\D_0$, where $\A_{-1}\in\calL\left(D(\A^{\frac{1}{2}}),(D(\A^{\frac{1}{2}})')\right)$ is the extension of $\A$. Let us show that $B\in\calL(\mathscr{U},(D(\A^{\frac{1}{2}})')$ is an admissible control operator for $A$. To this end, it suffice to prove that its adjoint $B^\ast$ is an admissible observation operator for $A$. As shown in \cite{Guo-sha}, we have
\begin{align*}
B^\ast f=\left( i\displaystyle\frac{\partial (\Delta^{-1}f)}{\partial \nu}\right)_{|\Gamma_0}.
\end{align*}
Otherwise, $B^\ast=\mathscr{C}_{|D(A)}$. According to \cite[Theorem 1.2]{Guo-sha}, the triple $(A,B,B^\ast)$ is regular. Thus, by Theorem \ref{Main-1}, the stochastic
system \eqref{chro} is well-posed.
\begin{remark}
 We can choose the potential $q$ in \eqref{chro} to be a stochastic process. In this case, the result of this section remains the same if we assume that $q\in L^\infty(\Om\times \mathscr{O})$. In fact, we define a multiplication operator on $L^2(\Om,H)$ by $(\tilde{\mathscr{M}}f)(\om,x)=q(\om,x)f(\om)$ for $x\in\mathscr{O}$ and $\P$-a.e. $\om\in\Om$. As our main results is based on \cite[Theorems 2.7 and 2.15]{Lahbiri-Hadd} and the proofs of these theorems are not affected by this choice, the result of this section will also not be affected.
\end{remark}

\end{document}